\newcommand{\noop}[1]{}
\theoremstyle{plain}
\newtheorem{theorem}{Theorem}[section]
\newtheorem{proposition}[theorem]{Proposition}
\newtheorem{lemma}[theorem]{Lemma}
\newtheorem{corollary}[theorem]{Corollary}
\theoremstyle{definition}
\newtheorem{definition}[theorem]{Definition}
\newtheorem*{example}{Example}
\newtheorem{remark}[theorem]{Remark}
\newcommand{\zmod}[1]{\mathbb{Z}/_{\!\!\;\!#1}}
\newcommand{\TC}[2]{TC^{#1,#2}}
\renewcommand{\P}{\mathcal{P}}
\newcommand{\cat}{\operatorname{cat}}
\newcommand{\nil}{\operatorname{nil}}
\newcommand{\nilker}{\nil\ker}
\newcommand{\secat}{\operatorname{secat}}
\newcommand{\im}{\operatorname{im}}
\newcommand{\id}{\operatorname{id}}
\newcommand{\const}{\operatorname{const}}
\newcommand{\suchthat}{\;\big|\;}
\title{Effective topological complexity of spaces with symmetries}
\author{Zbigniew Błaszczyk, Marek Kaluba\footnote{2010 \textit{Mathematics Subject Classification:} 55M30, 68T40\newline
\textit{Key words and phrases:} equivariant topological complexity, motion planning problem\newline
The authors have been supported by the National Science Centre, respectively under grants 2014/12/S/ST1/00368 and 2015/19/B/ST1/01458.}}
\date{}
\begin{document}
\maketitle

\begin{abstract}
We introduce a version of Farber's topological complexity suitable for investigating mechanical systems whose configuration spaces exhibit symmetries. Our invariant has vastly different properties to the previous approaches of Colman--Grant, Dranishnikov and Lubawski--Marzantowicz. In particular, it is bounded from above by Farber's topological complexity.
\end{abstract}

\section{Introduction}

A \textit{motion planner} in a space $X$ is an algorithm which, given a pair of points $(x,y) \in X \times X$, outputs a path from $x$ to $y$ in $X$. This notion is usually considered in the context of robotics, where $X$ is taken to be the space of all states (``configuration space'') of a mechanical system. Topology on $X$ is such that paths produced by the motion planner are continuous. A path can then be interpreted as a movement of the robot from one state to the other.

One would hope for a motion planner that is stable in the sense that a minor change of either the initial or terminal state results in a predictable change of the path taken by the robot. This, however, is rarely possible; see \cite[Theorem~1]{Farber2003a}. In order to quantify the ``order of instability'' of configuration spaces of mechanical systems, Farber \cite{Farber2003a}, \cite{Farber2004} introduced the notion of topological complexity ($TC$). Due to its applications in topological robotics and close relation to Lusternik--Schnirelmann category, it has attracted plenty of attention.

Mechanical systems often come equipped with symmetries visible in their configuration spaces, thus it not surprising that there have been attempts at weaving symmetries into the definition of to\-po\-lo\-gi\-cal complexity before, vide Colman--Grant \cite{Colman2012}, \mbox{Dranishnikov~\cite{Dranishnikov2015}}, and Lubawski--Marzantowicz \cite{Lubawski2014}.
They are, however, useful if one is either interested in motion planning algorithms that are somehow symmetric, or simply seeks for new invariants that are interesting from mainly mathematical, as opposed to robotical, point of view. Our foundational idea is completely different. We believe that symmetries present in configuration spaces can be used to ease the task of motion planning. Consider the following example.

\begin{example}
Assume that the task of the mechanical arm $R$ below is to grab and screw in an object, and in order to do so, it rotates its ``head'' around an axis contained in the plane of the figure.
\begin{center}
\includegraphics[scale=0.6]{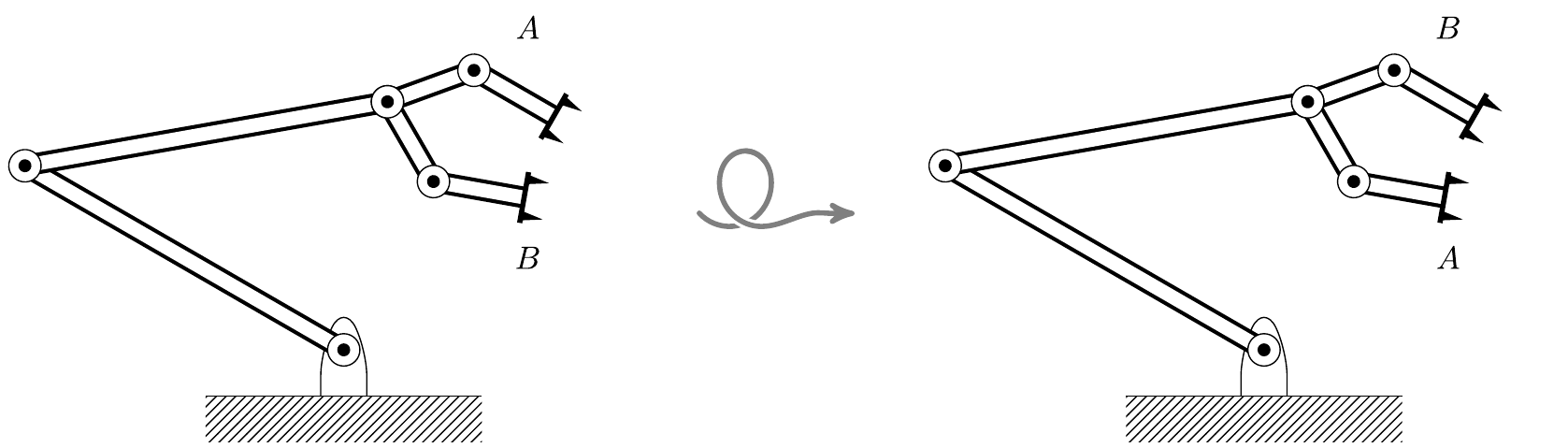}
\end{center}
It is easy to imagine that the arm can be designed in a way which makes the exact position of pliers irrelevant: the object in question can be grabbed equally well from the positions $(A,B)$, $(B,A)$, and all in between. This $S^1$-\hspace{0pt}symmetry can be interpreted as follows: even though these positions are physically different states of $R$, they are functionally equivalent, and therefore teaching the robot how to move from one to another is a waste of effort. While planning the motion, an algorithm should be allowed to ``leap'' between the states.
\end{example}

With this sort of example in mind, we introduce a new $G$-\hspace{0pt}homotopy invariant, \textit{effective topological complexity} $\TC{G}{\infty}$, designed to measure ``order of instability'' of configuration spaces of mechanical systems with symmetries. It is a modification of the Lubawski--Marzantowicz approach and its standout property is that it is bounded from above by~$TC$. In fact, a more general phenomenon occurs: if $H\subseteq G$ is any subgroup, then $\TC{G}{\infty}\leq  \TC{H}{\infty}$ (Lemma \ref{lemma:monoticity}). The new invariant also enjoys a cohomological lower bound and a product inequality resembling those of $TC$ (Theorems \ref{thm:effectiveTC_lower_bound} and \ref{thm:product_inequality}, respectively).

\section{Preliminaries}\label{section:preliminaries}
\subsection{Topological complexity}\label{section:TC}

Write $PX$ for the space of continuous paths in a topological space $X$. The map $\pi_1 \colon PX \to X \times X$ given by
\[ \pi_1(\gamma) = \big(\gamma(0), \gamma(1) \big) \textnormal{ for $\gamma \in PX$}\]
is well-known to be a fibration. A \textit{motion planner} on an open subset $U \subseteq X \times X$ is a section of $\pi_1$ over $U$, i.e. a map $s \colon U \to PX$ such that $\pi_1 \circ s$ is equal to the inclusion $i_U \colon U \to X \times X$. \textit{Topological complexity} of $X$, denoted $TC(X)$, is the least integer $\ell\geq  1$ such that there exists an open cover of $X\times X$ by $\ell$ sets which admit motion planners. (Note that we use the non-reduced version of $TC$ here.)

There exists the following cohomological lower bound for $TC$. (If $R$ is a ring, the \textit{nilpotency} of an ideal $I \subseteq R$, denoted $\mathop{\rm nil}I$, is defined to be the least integer $\ell\geq  0$ such that $I^{\ell+1}=0$.)

\begin{theorem}[{\textnormal{\cite[Theorem 7]{Farber2003a}}}]\label{thm:TC_lower_bound}
For any field $\Bbbk$,
\[ TC(X) > \nilker \!\big[H^*(X;\Bbbk) \otimes H^*(X; \Bbbk) \xrightarrow{\smallsmile} H^*(X;\Bbbk)\big].\]
\end{theorem}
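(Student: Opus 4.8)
The plan is to recognize $TC(X)$ as the Schwarz genus (sectional category) of the path fibration $\pi_1 \colon PX \to X \times X$ and then establish the standard cohomological lower bound for genus by a relative-cohomology computation. First I would observe that the diagonal $\Delta \colon X \to X \times X$ factors as $\Delta = \pi_1 \circ \const$, where $\const \colon X \to PX$ sends a point $x$ to the constant path at $x$. Since $\const$ is a homotopy equivalence (with homotopy inverse the evaluation $\gamma \mapsto \gamma(0)$), the induced map $\const^* \colon H^*(PX;\Bbbk) \to H^*(X;\Bbbk)$ is an isomorphism, and the relation $\Delta^* = \const^* \circ \pi_1^*$ shows that $\ker \pi_1^* = \ker \Delta^*$. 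Over a field, the Künneth isomorphism identifies $\Delta^*$ with the cup-product map $H^*(X;\Bbbk) \otimes H^*(X;\Bbbk) \xrightarrow{\smallsmile} H^*(X;\Bbbk)$, so $\ker \pi_1^*$ is precisely the ideal whose nilpotency appears in the statement.

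The heart of the argument is the genus inequality itself. Suppose $U_1, \dots, U_\ell \subseteq X \times X$ is an open cover by sets admitting motion planners, so that $TC(X) = \ell$, and let $\sigma_j \colon U_j \to PX$ be a section of $\pi_1$ over $U_j$. For any $u \in \ker \pi_1^*$ the restriction $u|_{U_j}$ vanishes: writing $i_j \colon U_j \hookrightarrow X \times X$ for the inclusion, we have $i_j = \pi_1 \circ \sigma_j$, whence $i_j^* u = \sigma_j^*(\pi_1^* u) = 0$. By the long exact sequence of the pair $(X \times X, U_j)$, each such $u$ therefore lifts to a relative class $\tilde{u}_j \in H^*(X \times X, U_j; \Bbbk)$ restricting to $u$ in absolute cohomology.

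Now, given classes $u_1, \dots, u_\ell \in \ker \pi_1^*$, I would lift each to $\tilde{u}_j \in H^*(X \times X, U_j;\Bbbk)$ and form the relative cup product $\tilde{u}_1 \smallsmile \cdots \smallsmile \tilde{u}_\ell$, which lands in $H^*\!\big(X \times X,\, U_1 \cup \cdots \cup U_\ell;\,\Bbbk\big)$. As the $U_j$ cover $X \times X$, this group equals $H^*(X \times X, X \times X;\Bbbk) = 0$, and restricting back to absolute cohomology gives $u_1 \smallsmile \cdots \smallsmile u_\ell = 0$. Thus every product of $\ell = TC(X)$ elements of $\ker \pi_1^*$ vanishes, i.e. $(\ker \pi_1^*)^{TC(X)} = 0$, which by definition of nilpotency means $\nil\ker\pi_1^* < TC(X)$. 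Combining this with the identification of $\ker \pi_1^*$ from the first paragraph yields the claimed inequality.

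The only genuinely delicate point is the multiplicativity and naturality of the relative cup product: one must know that the pairing $H^*(X \times X, A) \otimes H^*(X \times X, B) \to H^*(X \times X, A \cup B)$ is defined and compatible with restriction to absolute classes. This is the step where I would be most careful, but it is a standard property valid whenever $\{A, B\}$ is an excisive couple, which holds automatically for open sets. Everything else is formal bookkeeping with the long exact sequences of pairs.
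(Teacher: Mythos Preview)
Your proposal is correct and follows essentially the same route the paper sketches in Remark~\ref{remark:TC_lower_bound}: recognize $TC(X)=\secat(\pi_1)$, use the Schwarz lower bound $\secat(\pi_1)>\nilker\pi_1^*$, and identify $\pi_1^*$ with the cup product via the homotopy equivalence $\const\colon X\to PX$. The only difference is one of detail: the paper simply cites Schwarz's theorem for the genus inequality, whereas you supply its proof via relative cup products over the covering $\{U_j\}$. That extra work is standard and sound, and your caution about the excisive-couple hypothesis is well placed.
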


\begin{remark}\label{remark:TC_lower_bound}
Let us comment on how one arrives at this result, as we will invoke the argument later on. Topological complexity can be expressed in terms of sectional category of $\pi_1$, written $\secat (\pi_1)$. If $p \colon E \to B$ is any fibration, then $\secat (p) > \nilker\big[p^* \colon H^*(B;\Bbbk) \to H^*(E;\Bbbk)\big]$ for any field $\Bbbk$ by \cite[Theorem 4]{Schwarz}. Thus
\[ TC(X) = \secat (\pi_1) > \nilker \big[\pi_1^*\colon H^*(X\times X;\Bbbk)\to H^*(PX;\Bbbk)\big].\]
Farber noticed that $\pi_1^*$ coincides with the cup product homomorphism.
\end{remark}

\begin{example}[{\cite[Theorem 8]{Farber2003a}}]
Topological complexity of the sphere $S^n$, $n\geq  1$, is given by:
\[ TC(S^n) = \begin{cases}
2, & \textnormal{$n$ odd,}\\
3, & \textnormal{$n$ even.}
\end{cases}\]
We note that in both cases the lower bound provided by Theorem \ref{thm:TC_lower_bound} turns out to be accurate: $TC(S^n)= \nilker \big[\pi_1^*\colon H^*(S^n\times S^n;\mathbb{Q})\to H^*(PS^n;\mathbb{Q})\big]+1$.
\end{example}

\subsection{Equivariant versions of topological complexity}\label{section:equivariantTC}
There have been several approaches to defining an equivariant counterpart of topological complexity:
\begin{itemize}
\item ``equivariant topological complexity'' (Colman--Grant \cite{Colman2012}),
\item ``strongly equivariant topological complexity'' (Dranishnikov \cite{Dranishnikov2015}),
\item ``invariant topological complexity'' (Lubawski--Marzantowicz \cite{Lubawski2014}).
\end{itemize}
We briefly review their definitions in what follows.

Let $G$ be a topological group. Given a $G$-\hspace{0pt}space $X$, view $PX$ as a $G$-\hspace{0pt}space via the formula
\[ (g\gamma)(-) = g\big(\gamma(-)\big) \textnormal{ for $g \in G$ and $\gamma \in PX$.} \]
The space $X \times X$ can be seen as a $G$-\hspace{0pt}space with the diagonal $G$-\hspace{0pt}action. The map $\pi_1 \colon PX \to X \times X$ then becomes a $G$-\hspace{0pt}fibration. \textit{Equivariant topological complexity} of $X$, $TC_G(X)$, arises as the minimal number $\ell\geq  1$ such that there exists an open $G$-\hspace{0pt}invariant cover of $X\times X$ by $\ell$ sets which admit $G$-\hspace{0pt}equivariant motion planners. \textit{Strongly equivariant topological complexity} of $X$, $TC_G^*(X)$, is defined similarly, only that $X\times X$ is now viewed as a $(G\times G)$-\hspace{0pt}space via the component-wise action, and the open cover in question is required to be $(G\times G)$-\hspace{0pt}invariant. Clearly,
\[ TC(X)\leq  TC_G(X) \leq  TC_G^*(X). \]

The variant of Lubawski and Marzantowicz is a little bit different. Define
\[ PX\times_{X/G} PX= \big\{(\gamma,\delta) \in PX \times PX \suchthat G\gamma(1)=G\delta(0) \big\} \]
and consider it as a $(G\times G)$-\hspace{0pt}space with the component-wise action. The  map $\pi_2 \colon PX \times_{X/G} PX \to X \times X$ given by
\[ \pi_2(\gamma,\delta) = \big(\gamma(0), \delta(1)\big)\textnormal{ for $(\gamma,\delta) \in PX\times_{X/G} PX$} \]
turns out to be a $(G\times G)$-\hspace{0pt}fibration. \textit{Invariant topological complexity} of $X$, $TC^G(X)$, is defined to be the least integer $\ell\geq  1$ such that there exists an open $(G\times G)$-\hspace{0pt}invariant cover $U_1$,~\ldots, $U_\ell$ of $X\times X$ and $(G\times G)$-\hspace{0pt}equivariant maps $s_i \colon U_i \to PX \times_{X/G} PX$ with $\pi_2 \circ s_i = i_{U_i}$, $1\leq  i \leq  \ell$. There is no obvious relationship between $TC(X)$ and $TC^G(X)$, both inequalities can occur. 

All three invariants have a common lower bound in $TC(X^G)$, where $X^G=\{x\in X \suchthat gx=x \textnormal{ for any $g \in G$}\}$ is the fixed point set of $X$. As a result, they can be arbitrarily larger than $TC(X)$. (See \mbox{\cite[Proposition 4.5]{BlaszczykKaluba}} for what we believe to be an interesting family of examples of this sort of behaviour.) In particular, if $X^G$ is disconnected, the invariants are infinite, a phenomenon for which we do not know a satisfactory explanation from the point of view of robotics.

\begin{example}[{\cite[Theorems 3.5 and 3.6]{BlaszczykKaluba}}]
Let $S^n$ be a sphere equipped with a linear $\zmod{p}$-\hspace{0pt}action. Assume that $(S^n)^{\zmod{p}}=S^r$, $0<r<n$. Then:
\begin{enumerate}
\item[\textnormal{(1)}] $TC_{\zmod{p}}(S^n) =\begin{cases}
2, & \textnormal{both $n$ and $r$ are odd,} \\
3, & \textnormal{either $n$ or $r$ is even.}
\end{cases}$
\item[\textnormal{(2)}] $TC^{\zmod{p}}(S^n)=3$, unless $n$ is odd and $r=n-2$, or $n$ is even and $r=n-1$.
\end{enumerate}
The two omitted cases for $TC^{\zmod{p}}$ remain unsettled.
\end{example}

\begin{remark}
We are also aware of the notion of ``groupoid topological complexity'' due to Angel--Colman \cite{AngelColman}. It deals with measuring complexity of the problem of symmetric motion planning in a way which does not force it to be infinite if the fixed point set is disconnected. Interestingly enough, it can be re-interpreted in terms of synchronous motion planning of a group of robots. The bottom line, however, is that it has different properties from the invariant introduced in this paper; in particular, it is bounded from below by Lusternik--Schnirelmann $G$-\hspace{0pt}category (cf. Subsection \ref{subsection:catG_lower_bound}).
\end{remark}

\section{Effective topological complexity}\label{section:effectiveTC}
Let $k\geq  1$ be an integer, $G$ a topological group. Given a $G$-\hspace{0pt}space $X$, write
\[ \P_k(X) = \big\{ (\gamma_1, \ldots, \gamma_k) \in (PX)^k \suchthat  G\gamma_i(1)=G\gamma_{i+1}(0) \text{ for $1\leq  i\leq  k-1$}\big\}. \]
In particular, $\P_1(X)=PX$ and $\P_2(X) = PX \times_{X/G} PX$.
Define the map $\pi_k \colon \P_k(X) \to X \times X$ by
\[ \pi_k(\gamma_1,\ldots,\gamma_k) = \big(\gamma_1(0), \gamma_k(1)\big) \textnormal{ for $(\gamma_1$, \ldots, $\gamma_k) \in \P_k(X)$}. \]
Let us briefly explain that this is a fibration. Following \cite{Lubawski2014}, denote the saturated diagonal of $X$ by $\daleth(X)$, i.e.
\[
\daleth(X) = \big\{(g_1 x, g_2 x) \in X \times X \suchthat \textnormal{$g_1$, $g_2 \in G$ and $x \in X$}\big\}.
\]
Consider the restriction of the fibration $(\pi_1)^k \colon (PX)^k \to (X\times X)^{k}$ with respect to the subspace $X \times \daleth(X)^{k-1} \times X \subseteq (X\times X)^{k}$. The outcome is a fibration $\P_k(X) \to X \times \daleth(X)^{k-1} \times X$. Composing it with the projection onto the first and last coordinates clearly results in $\pi_k$.

\begin{definition}
\begin{enumerate}
\item[(1)] A \textit{$(G,k)$-\hspace{0pt}motion planner} on an open subset $U \subseteq X \times X$ is a section of $\pi_k$ over $U$, i.e. a map $s \colon U \to \P_k(X)$ such that $\pi_k \circ s = i_U$.
\item[(2)] Denote by $\TC{G}{k}(X)$ the least integer $\ell\geq  1$ such that there exists an open cover of $X \times X$ by $\ell$ sets which admit $(G,k)$-\hspace{0pt}motion planners.
\end{enumerate}
\end{definition}

It is clear that $\TC{G}{1}(X) = TC(X)$. Note that we do not require $(G,k)$-\hspace{0pt}motion planners to be equivariant, hence $\TC{G}{2}(X) \leq  TC^G(X)$. Examples of strict inequality are not hard to come by, and we will see many of them later on. 

We will now record a number of properties of $\TC{G}{k}$. Here is what we consider to be the cornerstone of our approach.

\begin{lemma}\label{lemma:monoticity}
The following inequalities hold for any $k\geq  1$ and any subgroup $H\subseteq G$:
\begin{enumerate}
\item[\textnormal{(1)}] $\TC{G}{k}(X)\leq  \TC{H}{k}(X)$,
\item[\textnormal{(2)}] $\TC{G}{k+1}(X)\leq \TC{G}{k}(X)$.
\end{enumerate}
\end{lemma}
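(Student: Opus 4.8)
The plan is to prove both inequalities by the same formal mechanism: in each case I will exhibit a continuous map between the relevant path spaces that commutes with the projections to $X \times X$, and then observe that postcomposing a motion planner with such a map yields a motion planner over the \emph{same} open set. Since this operation leaves the underlying open cover (and hence its cardinality) untouched, it transfers covers in the direction required by each bound.

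For (1), write $\P_k^H(X)$ and $\pi_k^H \colon \P_k^H(X) \to X \times X$ for the space and fibration obtained by replacing $G$ with $H$ throughout the definitions, so that $\P_k(X) = \P_k^G(X)$ and $\pi_k = \pi_k^G$. Since $H \subseteq G$, equality of $H$-orbits $H\gamma_i(1)=H\gamma_{i+1}(0)$ forces equality of $G$-orbits $G\gamma_i(1)=G\gamma_{i+1}(0)$; consequently $\P_k^H(X) \subseteq \P_k^G(X)$ as a subspace, and the inclusion $\iota$ satisfies $\pi_k^G \circ \iota = \pi_k^H$ because both projections are given by the same formula $(\gamma_1,\ldots,\gamma_k) \mapsto (\gamma_1(0), \gamma_k(1))$. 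Thus an $(H,k)$-motion planner $s \colon U \to \P_k^H(X)$ gives a $(G,k)$-motion planner $\iota \circ s$ over the same $U$, and an optimal open cover for $\TC{H}{k}(X)$ produces a cover of the same cardinality witnessing $\TC{G}{k}(X) \leq \TC{H}{k}(X)$.

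For (2), I define $j \colon \P_k(X) \to \P_{k+1}(X)$ by appending a constant path:
\[ j(\gamma_1, \ldots, \gamma_k) = \big(\gamma_1, \ldots, \gamma_k, c_{\gamma_k(1)}\big), \]
where $c_x \in PX$ denotes the constant path at $x$. The assignment $x \mapsto c_x$ is continuous and $\gamma_k(1)$ depends continuously on $(\gamma_1,\ldots,\gamma_k)$, so $j$ is continuous. The extra orbit constraint holds automatically, since $c_{\gamma_k(1)}(0) = \gamma_k(1)$ yields $G\gamma_k(1) = Gc_{\gamma_k(1)}(0)$, so $j$ indeed lands in $\P_{k+1}(X)$; moreover $\pi_{k+1} \circ j = \pi_k$, as appending a constant path at the terminal point alters neither endpoint. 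Exactly as before, postcomposing a $(G,k)$-motion planner with $j$ gives a $(G,k+1)$-motion planner over the same set, so an optimal cover for $\TC{G}{k}(X)$ furnishes one of the same size for $\TC{G}{k+1}(X)$, yielding $\TC{G}{k+1}(X) \leq \TC{G}{k}(X)$.

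The entire content sits in the two structural maps $\iota$ and $j$; once they are seen to commute with the projections, the inequalities follow formally. The only point I would flag as the main (if modest) obstacle is the \emph{direction} of the subspace inclusion in (1): enlarging the group relaxes the orbit conditions, so passing from $H$ to $G$ \emph{enlarges} the path space rather than shrinking it. This is precisely what drives the bound the counterintuitive way $\TC{G}{k} \leq \TC{H}{k}$; the remaining checks of continuity and well-definedness are routine.
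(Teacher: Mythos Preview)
Your proof is correct and follows essentially the same approach as the paper: for (1) the paper simply notes that the inequality is a straightforward consequence of the definition (which you have spelled out via the inclusion $\P_k^H(X)\subseteq \P_k^G(X)$), and for (2) the paper likewise converts a $(G,k)$-motion planner into a $(G,k+1)$-motion planner by appending a constant path at the terminal point. Your write-up is more detailed but contains no new idea beyond what the paper intends.
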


\begin{proof}
(1) This is a straightforward consequence of the definition.

(2) Consider a $(G,k)$-motion planner $s=(s_1,\ldots,s_k)$ as a $(G,k+1)$-motion planner by adding a constant path at its end: $s\mapsto\big(s, \textnormal{const}_{s_k(-,-)(1)}\big)$.
\end{proof}

\begin{theorem}\label{thm:G_invariance}
If there exists a $G$-\hspace{0pt}map $f \colon X \to Y$ and a map $g \colon Y \to X$ such that $f\circ g \simeq \id_Y$, then $\TC{G}{k}(Y) \leq \TC{G}{k}(X)$. In particular, if $X$ and $Y$ are $G$-\hspace{0pt}homotopy equivalent, then $\TC{G}{k}(X)=\TC{G}{k}(Y)$.
\end{theorem}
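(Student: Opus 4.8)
The plan is to establish the first, asymmetric statement — that a $G$-equivariant homotopy domination (a $G$-map $f\colon X\to Y$ and an arbitrary map $g\colon Y\to X$ with $f\circ g\simeq\id_Y$) forces $\TC{G}{k}(Y)\leq\TC{G}{k}(X)$ — and then deduce the equivalence case by applying it twice. This is the effective, equivariant analogue of the classical proof that $TC$ is a homotopy invariant. Suppose $\TC{G}{k}(X)=\ell$ and fix an open cover $U_1,\ldots,U_\ell$ of $X\times X$ together with $(G,k)$-motion planners $s_i\colon U_i\to\P_k(X)$. I would pull this cover back along $g\times g\colon Y\times Y\to X\times X$, setting $V_i=(g\times g)^{-1}(U_i)$; these sets are open and cover $Y\times Y$, so it suffices to equip each $V_i$ with a $(G,k)$-motion planner.

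First I would push paths forward. For $(y_0,y_1)\in V_i$ write $s_i\big(g(y_0),g(y_1)\big)=(\gamma_1,\ldots,\gamma_k)$ and consider $(f\gamma_1,\ldots,f\gamma_k)\in(PY)^k$. The crucial point is that this tuple again lies in $\P_k(Y)$: if $\gamma_j(1)=h\gamma_{j+1}(0)$ for some $h\in G$, then equivariance of $f$ gives $f(\gamma_j(1))=h\,f(\gamma_{j+1}(0))$, so $Gf\gamma_j(1)=Gf\gamma_{j+1}(0)$. This is precisely where equivariance of $f$ (and only $f$) is needed; $g$ may be an arbitrary map, in keeping with the hypothesis.

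The remaining issue is that $\pi_k(f\gamma_1,\ldots,f\gamma_k)=\big(fg(y_0),fg(y_1)\big)$ rather than $(y_0,y_1)$, and this endpoint correction is the step I expect to require the most care. Let $H\colon Y\times I\to Y$ be a homotopy from $fg$ to $\id_Y$ and, for each $y$, let $\alpha_y$ denote the path $t\mapsto H(y,t)$ from $fg(y)$ to $y$. I would then prepend the reverse path $\overline{\alpha_{y_0}}$ (from $y_0$ to $fg(y_0)$) to the first path and append $\alpha_{y_1}$ (from $fg(y_1)$ to $y_1$) to the last, replacing the tuple by $\big(\overline{\alpha_{y_0}}\cdot f\gamma_1,\,f\gamma_2,\ldots,f\gamma_{k-1},\,f\gamma_k\cdot\alpha_{y_1}\big)$ (for $k=1$ the single path simply becomes $\overline{\alpha_{y_0}}\cdot f\gamma_1\cdot\alpha_{y_1}$). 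The key observation is that concatenation alters only the two free ends, leaving every interior matching junction untouched: the terminal point of the first path is still $f\gamma_1(1)$ and the initial point of the last is still $f\gamma_k(0)$, so the modified tuple remains in $\P_k(Y)$ while now satisfying $\pi_k=(y_0,y_1)$.

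Continuity of the resulting section $V_i\to\P_k(Y)$ follows since it is assembled from $g\times g$, $s_i$, post-composition with $f$, the continuous family $(y_0,y_1)\mapsto(\alpha_{y_0},\alpha_{y_1})$ supplied by $H$, and path concatenation. This exhibits $V_1,\ldots,V_\ell$ as an open cover admitting $(G,k)$-motion planners, whence $\TC{G}{k}(Y)\leq\ell$. For the equivalence statement I would apply this inequality to both pairs of maps furnished by a $G$-homotopy equivalence: using $f\circ f'\simeq\id_Y$ yields $\TC{G}{k}(Y)\leq\TC{G}{k}(X)$, and using $f'\circ f\simeq\id_X$ yields the reverse inequality, giving equality.
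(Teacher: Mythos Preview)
Your proof is correct and follows essentially the same approach as the paper: pull back the cover along $g\times g$, push the motion planners forward via $f$ (using equivariance of $f$ to keep the tuples in $\P_k(Y)$), and repair the endpoints by concatenating with the paths supplied by a homotopy $f\circ g\simeq\id_Y$. The paper's write-up is terser but the argument is the same; your explicit remark that the endpoint concatenations leave the interior $G$-matching junctions untouched is a useful clarification.
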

\begin{proof}
The proof follows that of Farber's \cite[Theorem 3]{Farber2003a}. We spell it out here for the convienience of the reader.

Assume that we have a $(G,k)$-\hspace{0pt}motion planner $s=(s_1, \ldots, s_k)$ on an open subset $U \subseteq X \times X$. Let $V = (g\times g)^{-1}(U) \subseteq Y \times Y$ and fix a homotopy $H \colon Y \times [0,1] \to Y$ such that $H(-,0)=\id_Y$ and $H(-,1)=f\circ g$.
Define a map $\tilde{s} = (\tilde{s}_1,\ldots,\tilde{s}_k) \colon V \to \mathcal{P}_k(Y)$ as follows:
\[ \tilde{s}_i(x,y) = \begin{cases}
H(x,-) * f\circ s_1\big(g(x),g(y)\big), & i=1,\\
f\circ s_i\big(g(x),g(y)\big), & 2 \leq  i \leq  k-1,\\
f\circ s_n\big(g(x),g(y)\big) * H^{-1}(y,-), & i=k,
\end{cases}\]
where $*$ denotes concatenation of paths and $H^{-1}(y,-)$ is the path going in the opposite direction to $H(y,-)$.
Clearly, $\tilde{s}_1(x,y)(0)=x$, $\tilde{s}_k(x,y)(1) = y$ and, given that $s$ is a $(G,k)$-\hspace{0pt}motion planner and $f$ is a $G$-\hspace{0pt}map, $\big(\tilde{s}_1(x,y), \ldots, \tilde{s}_k(x,y)\big) \in \mathcal{P}_k(Y)$. This shows that $\tilde{s}$ is a $(G,k)$-\hspace{0pt}motion planner on $V$ and, consequently, $\TC{G}{k}(Y) \leq \TC{G}{k}(X)$.
\end{proof}

For any $G$-\hspace{0pt}space $X$, $\left(\TC{G}{k}(X)\right)_{k=1}^{\infty}$ is a decreasing sequence of $G$-\hspace{0pt}homotopy invariants and since the sequence is bounded from below by $1$, it stabilises at some point.

\begin{definition}
 Let $k_0\geq  1$ be the minimal number such that $\TC{G}{k}(X) = \TC{G}{k+1}(X)$ for any $k\geq  k_0$. Set
\[ \TC{G}{\infty}(X) = \TC{G}{k_0}(X). \]
This is the \textit{effective topological complexity} of $X$.
\end{definition}

One can interpret motion planning in this context as follows. A path output by a $(G,\infty)$-\hspace{0pt}motion planner is typically no longer continuous, but its discontinuities are of prescribed nature -- they are parametrised by symmetries. Whenever a robot follows such a path and runs into a point of discontinuity, it re-interprets its position accordingly within a batch of symmetric positions, and then resumes normal movement.

It is clear that $\TC{G}{\infty}$ satisfies the properties described in Lemma \ref{lemma:monoticity} and Theorem \ref{thm:G_invariance}. An obvious necessary condition for $\TC{G}{\infty}(X)$ to be finite is path-connectedness of the orbit space $X/G$.

\begin{remark}
Effective topological complexity is less than or equal to any other version of equivariant topological complexity discussed in Section \ref{section:equivariantTC}.
\end{remark}

Note that if $X$ is contractible, then, regardless of the nature of a group action, $\TC{G}{\infty}(X)=1$. Recall that a $G$-\hspace{0pt}space $X$ is said to be \textit{$G$-\hspace{0pt}contractible} if the identity $X \to X$ is $G$-\hspace{0pt}homotopic to a $G$-\hspace{0pt}map with values in a single orbit. Clearly any homogeneous $G$-\hspace{0pt}space, in particular, any topological group $G$ considered as a $G$-\hspace{0pt}space via group multiplication, is $G$-\hspace{0pt}contractible.

\begin{proposition}
If a $G$-\hspace{0pt}space $X$ is $G$-\hspace{0pt}contractible, then $\TC{G}{\infty}(X)=1$.
\end{proposition}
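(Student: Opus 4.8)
The plan is to prove the stronger statement $\TC{G}{2}(X) = 1$. Since the sequence $\big(\TC{G}{k}(X)\big)_{k=1}^{\infty}$ is non-increasing (Lemma \ref{lemma:monoticity}(2)) and bounded below by $1$, a single value of $1$ at $k=2$ propagates to all $k \geq 2$, and hence $\TC{G}{\infty}(X)=1$. Recall that $\TC{G}{2}(X)=1$ means exactly that there is a global $(G,2)$-motion planner, i.e. a section of $\pi_2$ defined on all of $X \times X$.

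First I would unwind $G$-contractibility into a concrete homotopy: choose $H \colon X \times [0,1] \to X$ with $H(-,0) = \id_X$ and $H(-,1) = c$, where the image of $c$ lies in a single orbit $Gx_0$. I would then exhibit the candidate section $s \colon X \times X \to \P_2(X)$ given by
\[ s(x,y) = \big(H(x,-),\, H^{-1}(y,-)\big), \]
where $H^{-1}(y,-)$ denotes the path $H(y,-)$ traversed backwards (as in the proof of Theorem \ref{thm:G_invariance}). Thus the first path moves $x$ to $c(x)$ and the second moves $c(y)$ back to $y$.

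Three points then need checking, and the first two are routine: continuity of $s$ is inherited from that of $H$, and the endpoint condition $\pi_2\big(s(x,y)\big) = \big(H(x,0),\, H^{-1}(y,-)(1)\big) = (x,y)$ shows $\pi_2 \circ s = i_{X\times X}$. The genuinely substantive step — and the crux of the whole argument — is verifying that $s(x,y)$ actually belongs to $\P_2(X)$, i.e. that the gluing condition $G\,H(x,1) = G\,H^{-1}(y,-)(0)$ holds. Here the construction pays off: $H(x,1) = c(x)$ and $H^{-1}(y,-)(0) = H(y,1) = c(y)$ both lie in the orbit $Gx_0$, so $G\,c(x) = Gx_0 = G\,c(y)$ and the condition is met for \emph{every} pair $(x,y)$ simultaneously.

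I do not anticipate a hard technical obstacle; the content is conceptual rather than computational. The point worth emphasising is precisely why the effective framework forces the value down to $1$: the midpoint ``jump'' from $c(x)$ to $c(y)$ is forbidden for an ordinary motion planner but permissible for a $(G,2)$-motion planner, since $c(x)$ and $c(y)$ are symmetric. I would also remark that the argument uses neither equivariance of $H$ nor of $c$, only that $c$ takes values in a single orbit — which is consistent with the fact that $(G,k)$-motion planners are not required to be equivariant.
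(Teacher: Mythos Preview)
Your proof is correct and takes a genuinely different route from the paper. The paper argues indirectly: it invokes the inequality $\TC{G}{\infty}(X)\leq TC^G(X)$ (a consequence of the remark that $\TC{G}{2}(X)\leq TC^G(X)$, since $(G,2)$-motion planners need not be equivariant) and then cites an external result stating that $TC^G(X)=1$ for $G$-contractible $X$. Your argument, by contrast, is self-contained: you write down an explicit global $(G,2)$-motion planner $s(x,y)=\big(H(x,-),\,H^{-1}(y,-)\big)$ and verify directly that the gluing condition holds because both $c(x)$ and $c(y)$ lie in the single orbit $Gx_0$. Your approach makes the mechanism transparent---the ``jump'' across the orbit at the midpoint---and, as you correctly observe, shows a mildly stronger fact: equivariance of the homotopy $H$ is never used, only that the terminal map $c$ has image in a single orbit. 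The paper's approach is shorter on the page but depends on an external reference; yours is the argument one would give to actually explain \emph{why} the result is true.
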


\begin{proof}
This follows from the fact that $\TC{G}{\infty}(X)\leq  TC^G(X)$ for any $G$-\hspace{0pt}space~$X$, and the latter equals $1$ whenever $X$ is $G$-\hspace{0pt}contractible, see \cite[Corollary 2.8, Remark~2.9]{BlaszczykKaluba}.
\end{proof}

\section{A lower bound for $\TC{G}{\infty}$}\label{section:effectiveTC_lower_bound}

Effective topological complexity enjoys the following analogue of Theorem \ref{thm:TC_lower_bound}. (In this section we make use of the existence of the transfer map in cohomology without further ado. See \cite[Chapter III, Section 2]{Bredon} for details.)

\begin{theorem}\label{thm:effectiveTC_lower_bound}
Let $G$ be a finite group and $X$ a $G$-\hspace{0pt}CW complex. If $\Bbbk$ is a field of characteristic zero or prime to the order of~$G$, then
\[ \TC{G}{\infty}(X) > \nilker \big[H^*(X/G;\Bbbk) \otimes H^*(X/G;k) \overset{\smallsmile}{\longrightarrow} H^*(X/G;\Bbbk)\big].\]
\end{theorem}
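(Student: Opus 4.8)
The plan is to apply Schwarz's sectional-category estimate to each $\pi_k$ and then transport the cup-product structure of $X/G$ into $H^*(X\times X;\Bbbk)$ along the quotient map. Exactly as in the discussion preceding Remark \ref{remark:TC_lower_bound}, the invariant $\TC{G}{k}(X)$ is the sectional category of $\pi_k$, so $\TC{G}{k}(X)=\secat(\pi_k)>\nilker\big[\pi_k^*\colon H^*(X\times X;\Bbbk)\to H^*(\P_k(X);\Bbbk)\big]$ for every $k\geq 1$. Writing $u$ for the cup-product homomorphism of $X/G$, it therefore suffices to prove $\nilker\pi_k^*\geq\nilker u$ for all $k$: since $\TC{G}{\infty}(X)=\TC{G}{k_0}(X)$ for the stabilisation index $k_0$, the desired bound then follows at once.

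First I would build a commutative square relating $\pi_k$ to the endpoint fibration of the orbit space. Let $q\colon X\to X/G$ be the quotient map. For $(\gamma_1,\ldots,\gamma_k)\in\P_k(X)$ the defining condition $G\gamma_i(1)=G\gamma_{i+1}(0)$ becomes $q\gamma_i(1)=q\gamma_{i+1}(0)$, so the projected paths concatenate to a single path in $X/G$; this gives a continuous map $c_k\colon\P_k(X)\to P(X/G)$, $(\gamma_1,\ldots,\gamma_k)\mapsto q\gamma_1*\cdots*q\gamma_k$. Comparing endpoints shows $(q\times q)\circ\pi_k=\pi_1\circ c_k$, where $\pi_1\colon P(X/G)\to X/G\times X/G$ is the endpoint fibration of $X/G$, and hence $\pi_k^*\circ(q\times q)^*=c_k^*\circ\pi_1^*$ in cohomology. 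By the argument recalled in Remark \ref{remark:TC_lower_bound}, now applied to $X/G$, the map $\pi_1^*\colon H^*(X/G\times X/G;\Bbbk)\to H^*(P(X/G);\Bbbk)\cong H^*(X/G;\Bbbk)$ is precisely $u$ under the Künneth identification $H^*(X/G\times X/G;\Bbbk)\cong H^*(X/G;\Bbbk)\otimes H^*(X/G;\Bbbk)$.

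Next I would transport non-nilpotent products from $X/G$ to $X$, and this is where the hypotheses enter. Because $G$ is finite and $|G|$ is invertible in $\Bbbk$, the transfer shows that $q^*\colon H^*(X/G;\Bbbk)\to H^*(X;\Bbbk)$ is a split injection onto the invariants; over a field, Künneth then identifies $(q\times q)^*$ with $q^*\otimes q^*$, so $(q\times q)^*\colon H^*(X/G\times X/G;\Bbbk)\to H^*(X\times X;\Bbbk)$ is again injective. Suppose $\nilker u=m$, and pick $a_1,\ldots,a_m\in\ker u$ with $a_1\cdots a_m\neq 0$. Setting $b_i=(q\times q)^*(a_i)$, we get $\pi_k^*(b_i)=c_k^*(\pi_1^*a_i)=0$ since $\pi_1^*a_i=u(a_i)=0$, so $b_i\in\ker\pi_k^*$; and $b_1\cdots b_m=(q\times q)^*(a_1\cdots a_m)\neq 0$ by injectivity, since $(q\times q)^*$ is a ring homomorphism. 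Thus $(\ker\pi_k^*)^m\neq 0$, i.e. $\nilker\pi_k^*\geq m=\nilker u$, and the Schwarz bound yields $\TC{G}{k}(X)>\nilker u$ for all $k$, hence $\TC{G}{\infty}(X)>\nilker u$.

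The endpoint bookkeeping for $c_k$ and the Künneth identifications are routine; the step that carries the real weight is the injectivity of $(q\times q)^*$, which is exactly where the finiteness of $G$ and the characteristic assumption on $\Bbbk$ are indispensable. Without them $q^*$ may fail to be injective and non-nilpotent products need not survive the pullback, so the whole transport collapses. The only other point deserving care is the continuity of the concatenation map $c_k$ in the compact–open topology together with the commutativity of the square, both of which become transparent once the orbit condition defining $\P_k(X)$ is rewritten in $X/G$.
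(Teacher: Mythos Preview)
Your argument is correct and follows essentially the same strategy as the paper's proof: identify $\TC{G}{k}(X)$ with $\secat(\pi_k)$, relate $\pi_k$ to the path fibration of $X/G$ via a concatenation map, and use injectivity of the quotient map in cohomology (from the transfer) to transport non-nilpotent products from $\ker u$ into $\ker\pi_k^*$. The only cosmetic difference is that the paper factors your map $c_k$ through the intermediate orbit space $\P_k(X)/G^{k}$, whereas you pass directly from $\P_k(X)$ to $P(X/G)$; this changes nothing of substance.
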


\begin{proof}
We will prove that the inequality in question in fact holds for any $\TC{G}{k}(X)$ with $k \geq  2$.

View $\mathcal{P}_k(X)$ as a $G^k$-\hspace{0pt}space via the component-wise action. The space \mbox{$X \times X$} is naturally a $(G\times G)$-\hspace{0pt}space, but it can be considered as a $G^k$-\hspace{0pt}space by precomposing the action with the projection $G^k \to G\times G$ onto the first and last coordinates. This way $\pi_k \colon \mathcal{P}_k(X) \to X \times X$ becomes a $G^k$-\hspace{0pt}equivariant map, and thus it induces a map $\overline{\pi}_k \colon \mathcal{P}_k(X)/G^k \to X/G \times X/G$ between respective orbit spaces. Let $p \colon P(X/G) \to X/G \times X/G$ denote the usual path fibration and $\eta \colon \mathcal{P}_k(X)/G^k \to P(X/G)$ concatenation of a sequence of $k$ paths in~$X/G$. These maps fit into the following commutative diagram.

\begin{center}
\begin{tikzpicture}
\node (PnX)     at  (0,4) {$\P_k(X)$};
\node (XX)      at  (5,4) {$X\times X$};
\node (PnXG)    at  (0,2) {$\P_k(X)/G^{k}$};
\node (XGXG)    at  (5,2) {$X/G\times X/G$};
\node (PXG)     at  (2.5,0.25) {$P(X/G)$};
\draw [->] (PnX) edge node [above] {$\pi_{k}$} (XX);
\draw [->] (PnX) edge node [left] {$\pi_G'$} (PnXG);
\draw [->] (PnXG) edge node [above] {$\overline{\pi}_{k}$} (XGXG);
\draw [->] (XX) edge node [right] {$\pi_G$} (XGXG);
\draw [->] (PnXG) edge node [above, sloped]{$\eta$} (PXG);
\draw [->] (PXG) edge node [above, sloped]{$p$} (XGXG);
\end{tikzpicture}
\end{center}

Choose a field $\Bbbk$ of characteristic zero or prime to $|G|$ and apply the functor $H^*(-;\Bbbk)$ to the diagram above. Note that if $\alpha \in \ker p^*$, then $\pi_G^*(\alpha) \in \ker \pi_k^*$. Indeed,
\[\pi^*_k\big(\pi_G^*(\alpha)\big) = (\pi_G')^*\big(\bar{\pi}_k^*(\alpha)\big) = \pi_G^*\big(\eta^*(p^*\alpha)\big)=0.\]
Thanks to the choice of coefficients, $\pi_G^*$ is a monomorphism, which implies that if a product of elements $\alpha_1, \ldots, \alpha_\ell\in \ker p^*$ is non-zero, then so is the product of elements $\pi_G^*(\alpha_1), \ldots, \pi_G^*(\alpha_\ell) \in \ker\pi_k^*$. This shows that
\[ \nilker p^*\leq  \nilker\pi_k^*.\]
As mentioned in Remark \ref{remark:TC_lower_bound}, the left-hand side is equal to nilpotency of the cup product homomorphism $H^*(X/G;\Bbbk) \otimes H^*(X/G;\Bbbk) \to H^*(X/G;\Bbbk)$, and the right-hand side constitutes a (sharp) lower bound for $\secat (\pi_k)$. It is, however, clear from the definition that $\TC{G}{k}(X) = \secat (\pi_k)$.
\end{proof}

\begin{corollary}\label{cor:TC=effectiveTC}
Let $G$ be a finite group and $X$
a $G$-\hspace{0pt}CW complex.
If $G$ acts trivially on $H^*(X;\Bbbk)$ and
\[ TC(X)=\nilker\big[H^*(X;\Bbbk) \otimes H^*(X;\Bbbk) \xrightarrow{\smallsmile} H^*(X;\Bbbk)\big]+1\]
for some field $\Bbbk$ of characteristic $0$ or prime to the order of $G$, then $\TC{G}{\infty}(X)=TC(X)$.
\end{corollary}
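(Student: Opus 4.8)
The plan is to prove the two inequalities $\TC{G}{\infty}(X) \le TC(X)$ and $TC(X) \le \TC{G}{\infty}(X)$ separately. The first holds unconditionally: since $\TC{G}{1}(X) = TC(X)$ and the sequence $\big(\TC{G}{k}(X)\big)_{k=1}^{\infty}$ is non-increasing by Lemma \ref{lemma:monoticity}(2), we have $\TC{G}{\infty}(X) = \TC{G}{k_0}(X) \le \TC{G}{1}(X) = TC(X)$. The entire content of the corollary therefore lies in the reverse inequality, and the idea is to compare the cohomological lower bounds for $TC(X)$ and for $\TC{G}{\infty}(X)$ furnished by Theorems \ref{thm:TC_lower_bound} and \ref{thm:effectiveTC_lower_bound}.

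For the reverse inequality I would invoke Theorem \ref{thm:effectiveTC_lower_bound} (applied to any $k \ge 2$ with $\TC{G}{k}(X) = \TC{G}{\infty}(X)$, of which there are infinitely many since the sequence stabilises), giving
\[ \TC{G}{\infty}(X) > \nilker\big[H^*(X/G;\Bbbk) \otimes H^*(X/G;\Bbbk) \xrightarrow{\smallsmile} H^*(X/G;\Bbbk)\big]. \]
The task is then to identify the right-hand side with $\nilker$ of the cup product homomorphism of $X$ itself. The bridge is the map $\pi_G^* \colon H^*(X/G;\Bbbk) \to H^*(X;\Bbbk)$. In the proof of Theorem \ref{thm:effectiveTC_lower_bound} this map is already seen to be a monomorphism, thanks to the choice of coefficients. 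The extra hypothesis that $G$ acts trivially on $H^*(X;\Bbbk)$ is exactly what is needed to upgrade this to an isomorphism: the transfer argument of \cite[Chapter III, Section 2]{Bredon} identifies $\im \pi_G^*$ with the invariant subring $H^*(X;\Bbbk)^G$, and triviality of the action forces $H^*(X;\Bbbk)^G = H^*(X;\Bbbk)$.

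Once $\pi_G^*$ is known to be a ring isomorphism, it intertwines the cup product homomorphisms of $X/G$ and of $X$, hence carries one kernel ideal isomorphically onto the other and preserves nilpotency. The right-hand side of the displayed inequality therefore equals $\nilker\big[H^*(X;\Bbbk) \otimes H^*(X;\Bbbk) \xrightarrow{\smallsmile} H^*(X;\Bbbk)\big]$, which by the sharpness hypothesis is $TC(X) - 1$. Thus $\TC{G}{\infty}(X) > TC(X) - 1$, i.e. $\TC{G}{\infty}(X) \ge TC(X)$, and combining with the first paragraph yields the claimed equality. The only genuinely delicate step is the passage from injectivity to surjectivity of $\pi_G^*$; everything else is formal. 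This is precisely the point at which both standing hypotheses — on the action and on the coefficients — are consumed, so the one thing to verify carefully is that the transfer identification of $\im \pi_G^*$ with $H^*(X;\Bbbk)^G$ applies in the $G$-CW setting of \cite{Bredon}.
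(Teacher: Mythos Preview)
Your argument is correct and follows essentially the same route as the paper: both use the transfer identification $H^*(X/G;\Bbbk)\cong H^*(X;\Bbbk)^G=H^*(X;\Bbbk)$ to equate the two $\nilker$ terms, then sandwich $\TC{G}{\infty}(X)$ between the lower bound of Theorem~\ref{thm:effectiveTC_lower_bound} and the upper bound $TC(X)$ coming from monotonicity. Your write-up is simply more explicit about why $\pi_G^*$ is surjective, which the paper's proof takes for granted.
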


\begin{proof}
Since $G$ acts trivially on cohomology, $H^*(X/G;\Bbbk) \cong H^*(X;\Bbbk)$ and, consequently:
\begin{align*}
TC(X) &= \nilker\big[H^*(X;\Bbbk) \otimes H^*(X;\Bbbk) \to H^*(X;\Bbbk)\big]+1 \\
&= \nilker\big[H^*(X/G;\Bbbk) \otimes H^*(X/G;\Bbbk) \to H^*(X/G;\Bbbk)\big]+1 \\
&\leq  \TC{G}{\infty}(X) \leq TC(X).
\end{align*}
\end{proof}

\section{Calculations of $\TC{G}{\infty}$}

\subsection{Effective topological complexity of free $G$-spaces}

\begin{theorem}\label{thm:effectiveTC_free_actions}
If $G$ is a finite group and $X$ is a free $G$-\hspace{0pt}space, then $\TC{G}{k}(X)=\TC{G}{k+1}(X)$ for any $k \geq  2$. In particular, $\TC{G}{\infty}(X) = \TC{G}{2}(X)$.
\end{theorem}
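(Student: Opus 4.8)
The plan is to establish the reverse of the inequality furnished by Lemma \ref{lemma:monoticity}(2). Since that lemma already gives $\TC{G}{k+1}(X) \leq \TC{G}{k}(X)$, it suffices to prove $\TC{G}{k}(X) \leq \TC{G}{k+1}(X)$ whenever $k \geq 2$; equality, and hence stabilisation of the sequence from $k=2$ onward, then follows at once. I would obtain this inequality by exhibiting a map $\Phi \colon \P_{k+1}(X) \to \P_k(X)$ lying over $X \times X$, that is, satisfying $\pi_k \circ \Phi = \pi_{k+1}$. Post-composing any $(G,k+1)$-motion planner $s \colon U \to \P_{k+1}(X)$ with $\Phi$ then produces a $(G,k)$-motion planner $\Phi \circ s \colon U \to \P_k(X)$ on the very same open set $U$, so an open cover of $X \times X$ by $\ell$ sets admitting $(G,k+1)$-motion planners is also a cover by sets admitting $(G,k)$-motion planners.

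The map $\Phi$ would compress the first two paths of a tuple into one. Given $(\gamma_1, \ldots, \gamma_{k+1}) \in \P_{k+1}(X)$, the matching condition $G\gamma_1(1) = G\gamma_2(0)$ together with freeness of the action provides a unique $g \in G$ with $\gamma_2(0) = g\gamma_1(1)$. Then $g^{-1}\gamma_2$ starts where $\gamma_1$ ends, and I set
\[ \Phi(\gamma_1, \ldots, \gamma_{k+1}) = \big(\gamma_1 * (g^{-1}\gamma_2),\, \gamma_3,\, \ldots,\, \gamma_{k+1}\big). \]
The new tuple again lies in $\P_k(X)$: its only new junction satisfies $G(g^{-1}\gamma_2)(1) = G\gamma_2(1) = G\gamma_3(0)$, while all later matching conditions are inherited verbatim. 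Crucially, the first path still begins at $\gamma_1(0)$ and---because $k \geq 2$ guarantees that $\gamma_{k+1}$ is not among the two merged paths---the last path is still $\gamma_{k+1}$, ending at $\gamma_{k+1}(1)$. Hence $\pi_k \circ \Phi = \pi_{k+1}$. This is exactly where the hypothesis $k \geq 2$ enters: for $k=1$ the pair being merged would include the terminal path, and translating it by $g^{-1}$ would move the endpoint $\gamma_2(1)$ off its prescribed value.

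It then remains to see that $\Phi$ is continuous, for which the only delicate point is continuity of the assignment $(\gamma_1, \ldots, \gamma_{k+1}) \mapsto g$. Here I would use that $G$ is finite and the action free: the set $\big\{(a,b) \in X \times X \suchthat Ga = Gb\big\}$ is the union of the graphs $\big\{(a, ga) \suchthat a \in X\big\}$, $g \in G$, which are pairwise disjoint by freeness and closed since $X$ is Hausdorff. Being finitely many disjoint closed sets, each is also open in the union, so the map sending a pair in a common orbit to the unique connecting group element is locally constant, hence continuous. Since $g$ factors through the continuous evaluation $(\gamma_1, \gamma_2) \mapsto (\gamma_1(1), \gamma_2(0))$, and both concatenation and the $G$-action on $PX$ are continuous, $\Phi$ is continuous.

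The main obstacle, and the reason the statement is restricted to $k \geq 2$, is the need to reduce the number of paths while keeping both endpoints $\gamma_1(0)$ and $\gamma_{k+1}(1)$ pinned: freeness supplies a canonical symmetry with which to glue two consecutive paths, but this gluing necessarily translates one of them, so one must have enough paths to perform the merge away from whichever path carries a fixed endpoint. The secondary technical point is the continuity (local constancy) of that symmetry, which is precisely what finiteness of $G$ and freeness of the action deliver.
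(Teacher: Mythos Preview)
Your proof is correct and follows essentially the same approach as the paper: use freeness to extract the unique group element relating two consecutive endpoints, translate one of the two paths so they can be concatenated, and thereby convert a $(G,k+1)$-motion planner into a $(G,k)$-motion planner on the same open set. The only cosmetic differences are that the paper merges the \emph{last} two paths (translating $\gamma_k$ so it matches $\gamma_{k+1}$) rather than the first two, and works directly with a given motion planner rather than packaging the construction as a global map $\Phi\colon \P_{k+1}(X)\to\P_k(X)$ over $X\times X$; your continuity argument for the group-element assignment is in fact more explicit than the paper's, which simply asserts it.
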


\begin{proof}
Let $U \subseteq X\times X$ be an open subset admitting a $(G,k+1)$-\hspace{0pt}motion planner~$s$. We will turn it into a $(G,k)$-\hspace{0pt}motion planner.

Given $(x,y) \in U$, write $s(x,y)=(\gamma_1, \ldots, \gamma_{k+1})$ and set $\Gamma(x,y)$ to be the unique element of $G$ such that $\Gamma(x,y)\gamma_k(1)=\gamma_{k+1}(0)$. (Uniqueness of $\Gamma(x,y)$ is a consequence of freeness of the action.) This gives a continuous function $\Gamma \colon U \to G$. Now define
\[ \tilde{s}(x,y) = \big(\gamma_1, \ldots, \gamma_{k-1}, \Gamma(x,y)\gamma_k * \gamma_{k+1}\big). \]
Clearly, $\tilde{s}(x,y)$ is a ``$(G,k)$-\hspace{0pt}path'' in $X$ between $x$ and $y$. Since both $s$ and $\Gamma$ are continuous, letting $(x,y) \in U$ vary yields a $(G,k)$-\hspace{0pt}motion planner on $U$.
\end{proof}

\begin{lemma}\label{lemma:free_actions_lower_bound}
Let $G$ be a finite group and $X$ a finite-dimensional free $G$-\hspace{0pt}CW complex. If there exists a prime $p$ such that $X$ is not $\operatorname{mod}p$ acyclic and $H^i(X;\zmod{p})$ is finitely generated for all $i\geq 0$, then $\TC{G}{\infty}(X)\geq  2$.
\end{lemma}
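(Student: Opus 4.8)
The plan is to reduce the statement to the level $k=2$ and then to argue that a single global $(G,2)$-motion planner would force the orbit projection $q\colon X\to X/G$, and hence $X$ itself, to be contractible; since $X$ is not mod $p$ acyclic this is impossible. Throughout I assume $X$ is connected (as is implicit for configuration spaces; the conclusion genuinely fails for disconnected $X$ such as $X=G$ under translation). First I would invoke Theorem \ref{thm:effectiveTC_free_actions}: freeness gives $\TC{G}{\infty}(X)=\TC{G}{2}(X)$, so it suffices to rule out $\TC{G}{2}(X)=1$. Suppose, for contradiction, that there is a global $(G,2)$-motion planner $s=(s_1,s_2)\colon X\times X\to\P_2(X)$.

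Next I would push the ``effective paths'' down to the orbit space. Writing $s(x,y)=(\gamma,\delta)$, the defining condition $G\gamma(1)=G\delta(0)$ of $\P_2(X)$ guarantees that the images $q\gamma$ and $q\delta$ are composable paths in $X/G$, so their concatenation is a genuine path from $\overline{x}$ to $\overline{y}$. Letting $(x,y)$ vary this is continuous, and its exponential adjoint is a homotopy $F\colon (X\times X)\times[0,1]\to X/G$ with $F(-,0)=q\circ\mathrm{pr}_1$ and $F(-,1)=q\circ\mathrm{pr}_2$. Restricting $F$ to $X\times\{x_0\}$ then exhibits a null-homotopy of $q$.

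Finally I would feed this into the covering homotopy property of $q$, which is a covering map because a finite group acts freely. Lifting the null-homotopy with initial value $\id_X$ produces a homotopy $\widetilde{F}\colon X\times[0,1]\to X$ whose terminal map takes values in the single fibre $q^{-1}(\overline{x_0})$; this fibre is the discrete orbit of $x_0$, so connectedness of $X$ forces the terminal map to be constant. Hence $\id_X$ is homotopic to a constant, i.e. $X$ is contractible, contradicting the assumption that $X$ is not mod $p$ acyclic. Therefore $\TC{G}{2}(X)\geq 2$, and with it $\TC{G}{\infty}(X)\geq 2$.

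I expect the delicate step to be the descent in the second paragraph: the motion planner is \emph{not} assumed equivariant, yet one must extract from it a genuine, continuous homotopy between the two projections onto $X/G$. The point to verify is that post-composing $s$ with the orbit projection and concatenating is well defined and continuous \emph{precisely} because of the saturated-diagonal gluing condition built into $\P_2(X)$, and that passage to the adjoint is legitimate for the complexes at hand. By contrast, the cohomological lower bound of Theorem \ref{thm:effectiveTC_lower_bound} will not suffice on its own here: when $X/G$ is acyclic over every field admissible for that bound---as for $X=S^{2n}$ with the antipodal action, where $X/G=\mathbb{RP}^{2n}$---it yields only $\TC{G}{\infty}(X)\geq 1$, which is exactly why the contractibility argument is needed. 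The finite-dimensionality and finite-generation hypotheses play no role in this route beyond ensuring, via non-acyclicity, that $X$ is genuinely non-contractible.
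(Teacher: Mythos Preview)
Your argument is correct and follows a genuinely different route from the paper's. The paper proceeds cohomologically: it observes that $\P_2(X)$ sits in a pullback square making $\P_2(X)\to PX$ a fibration with fibre homotopy equivalent to the finite set $G$; a one-line Serre spectral sequence then gives $H^i\big(\P_2(X);\zmod{p}\big)=0$ for $i>n$, where $n$ is the top nonvanishing degree of $H^*(X;\zmod{p})$. Consequently any nonzero class in $H^{2n}(X\times X;\zmod{p})$ lies in $\ker\pi_2^*$, and the sectional-category lower bound of Remark~\ref{remark:TC_lower_bound} yields $\TC{G}{2}(X)\geq 2$.

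Your covering-space argument is more elementary---no spectral sequence is needed---and it actually proves something sharper: for a connected free $G$-CW complex, $\TC{G}{2}(X)=1$ forces $X$ to be contractible, not merely $\operatorname{mod}p$ acyclic. You are also right that the finite-dimensionality and finite-generation hypotheses play no role in your route; in the paper they are used precisely to guarantee a largest $n\geq 1$ with $H^n(X;\zmod{p})\neq 0$ (which, incidentally, is where the paper is tacitly assuming connectedness as well, so your caveat about $X=G$ is well taken). On the other hand, the paper's approach stays within the cohomological framework of Theorem~\ref{thm:effectiveTC_lower_bound} and Remark~\ref{remark:TC_lower_bound}, and it produces an explicit nonzero element of $\ker\pi_2^*$, which is the kind of data one would want if aiming for bounds beyond $\TC{G}{2}(X)\geq 2$.
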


\begin{proof}
In view of Theorem \ref{thm:effectiveTC_free_actions}, it suffices to prove that $\TC{G}{2}(X) \geq  2$.
By the freeness assumption, the map $\omega_0 \colon PX \to X/G$ assigning to a path the orbit of its initial point is a fibration. Since $\P_2(X)$ fits into the pullback diagram
\begin{center}
 \begin{tikzpicture}[yscale = 0.8]
  \node (P2M) at (0,2) {$\P_2(X)$};
  \node (PMl) at (0,0) {$PX$};
  \node (PMu) at (2,2) {$PX$};
  \node (MG)  at (2,0) {$X/G$};
  \draw [->] (P2M) edge (PMl);
  \draw [->] (P2M) edge (PMu);
  \draw [->] (PMl) edge node[above] {$\omega_1$} (MG);
  \draw [->] (PMu) edge node[right] {$\omega_0$} (MG);
 \end{tikzpicture}
\end{center}
\noindent the map $\mathcal{P}_2(X) \to PX$ is a fibration whose fibre is homotopy equivalent to $G$. Let $n\geq  1$ be the largest integer such that $H^n(X;\zmod{p})\neq 0$. Applying the Serre spectral sequence with $\zmod{p}$-\hspace{0pt}coefficients shows immediately that \mbox{$H^i(\mathcal{P}_2(X); \zmod{p})=0$} for $i>n$. Therefore $\pi_2^*$ sends any element of $H^{2n}(X\times X; \zmod{p})$ to zero. Since $\TC{G}{2}(X)$ is equal to sectional category of $\pi_2$, the conclusion follows (see Remark~\ref{remark:TC_lower_bound}).
\end{proof}
\noindent It is instructive to compare the last result with Proposition \ref{prop:tc_reflection}.

\subsection{Effective topological complexity of $\zmod{p}$-spheres}
Throughout this section $n \geq  1$ is a fixed integer. All considered spaces are assumed to be $\zmod{p}$-\hspace{0pt}CW complexes or, equivalently, CW complexes with cellular $\zmod{p}$-\hspace{0pt}actions.

\begin{proposition}\label{prop:effectiveTC_zp_spheres}
Let $p>2$ be a prime. Effective topological complexity of a $\zmod{p}$-\hspace{0pt}sphere $S^n$ is given by
\[ \TC{\zmod{p}}{\infty}(S^n) =
\begin{cases}
2, & \textnormal{$n$ is odd,}\\
3, & \textnormal{$n$ is even, $n>0$.}
\end{cases}\]
\end{proposition}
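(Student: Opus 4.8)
The plan is to deduce both cases simultaneously from Corollary \ref{cor:TC=effectiveTC}, applied with $G=\zmod{p}$ and $\Bbbk=\mathbb{Q}$. That corollary asserts $\TC{\zmod{p}}{\infty}(S^n)=TC(S^n)$ provided two hypotheses hold: that $\zmod{p}$ acts trivially on $H^*(S^n;\mathbb{Q})$, and that $TC(S^n)$ already realises the cohomological lower bound of Theorem \ref{thm:TC_lower_bound} over $\mathbb{Q}$. (Under the hood this just combines the upper bound $\TC{\zmod{p}}{\infty}\leq TC$ with the lower bound of Theorem \ref{thm:effectiveTC_lower_bound}, the point being that trivial action plus $\mathbb{Q}$-coefficients gives $H^*(S^n/\zmod{p};\mathbb{Q})\cong H^*(S^n;\mathbb{Q})$ via the transfer.) Once both hypotheses are checked, the values $2$ and $3$ are read straight off Farber's computation of $TC(S^n)$.

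The essential point, and the only place where $p>2$ is used, is verifying that $\zmod{p}$ acts trivially on $H^*(S^n;\mathbb{Q})$. Let $g$ generate $\zmod{p}$, acting on $S^n$ as a homeomorphism. Its action on $H^n(S^n;\mathbb{Q})\cong\mathbb{Q}$ is multiplication by $\deg g\in\{+1,-1\}$. Since $g^p=\id$ we have $(\deg g)^p=1$, and as $p$ is odd this forces $\deg g=+1$. Hence $g$ acts as the identity on $H^n(S^n;\mathbb{Q})$, and of course on $H^0$, so the whole action on $H^*(S^n;\mathbb{Q})$ is trivial, regardless of the fixed-point structure. For $p=2$ this step breaks down (an orientation-reversing involution has $\deg g=-1$), which is precisely why that case is excluded.

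For the second hypothesis I would invoke the sharpness statement recorded in the Example computing $TC(S^n)$, namely $TC(S^n)=\nilker\big[H^*(S^n;\mathbb{Q})\otimes H^*(S^n;\mathbb{Q})\xrightarrow{\smallsmile}H^*(S^n;\mathbb{Q})\big]+1$; alternatively one reproves it in one line by setting $\bar{x}=x\otimes 1-1\otimes x$ with $|x|=n$, so that $\bar{x}^2=-(1+(-1)^{n^2})\,x\otimes x$, giving $\bar{x}^2=0$ and $\nilker=1$ when $n$ is odd, and $\bar{x}^2=-2\,x\otimes x\neq 0$ but $\bar{x}^3=0$, so $\nilker=2$, when $n$ is even. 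With both hypotheses in hand, Corollary \ref{cor:TC=effectiveTC} gives $\TC{\zmod{p}}{\infty}(S^n)=TC(S^n)$, equal to $2$ for $n$ odd and $3$ for $n$ even.

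I expect the degree/orientation observation to be the only genuine content; the rest is bookkeeping. The feature worth emphasising is that the argument is entirely insensitive to $(S^n)^{\zmod{p}}$, in sharp contrast to the Colman--Grant, Dranishnikov, and Lubawski--Marzantowicz invariants, whose values on $\zmod{p}$-spheres depend heavily on the fixed-point sphere. This robustness is exactly what passing to $|G|$-invertible coefficients buys us, and it is the reason effective topological complexity recovers $TC(S^n)$ on the nose.
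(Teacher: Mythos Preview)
Your proposal is correct and follows exactly the paper's approach: the paper's proof is the single sentence ``Since $p>2$, the action is trivial on rational cohomology, and the conclusion follows from Corollary~\ref{cor:TC=effectiveTC},'' and you have simply unpacked both implicit steps (the degree argument forcing $\deg g=+1$ when $p$ is odd, and the sharpness of the cohomological bound for $TC(S^n)$ over $\mathbb{Q}$ recorded in the Example following Theorem~\ref{thm:TC_lower_bound}). Your added remarks about why $p=2$ fails and why the result is insensitive to the fixed-point set are accurate and helpful context, though not part of the formal argument.
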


\begin{proof}
Since $p>2$, the action is trivial on rational cohomology, and the conclusion follows from Corollary \ref{cor:TC=effectiveTC}.
\end{proof}

Clearly, if $p=2$ and the action preserves orientation, the conclusion of Proposition \ref{prop:effectiveTC_zp_spheres} still holds. Thus what remains is to deal with orientation-reversing involutions.

Recall that $\daleth(S^n)$ denotes the saturated diagonal of $S^n$. Let $\Delta \colon S^n \to S^n\times S^n$ be the diagonal map and $j \colon \daleth(S^n) \to S^n \times S^n$ the inclusion.

\begin{lemma}\label{lemma:daleth}
Suppose that $\zmod{2}$ acts on $S^n$ with an $r$-\hspace{0pt}dimensional fixed point set,
$0 \leq  r \leq  n-2$. Then:
\begin{enumerate}
\item[\textnormal{(1)}]
 $H^i\big(\daleth(S^n);\zmod{2}\big) \cong \begin{cases}
\zmod{2}, & \textnormal{$i=0$, $r+1$}, \\
\zmod{2}\oplus \zmod{2}, & i=n,
\end{cases}$
\item[\textnormal{(2)}] If $\alpha \in H^n(S^n;\zmod{2})$ is the $\operatorname{mod}2$ fundamental class of $S^n$, then
\[ j^*(\alpha\otimes 1)=j^*(1\otimes \alpha)\neq 0. \]
\end{enumerate}
\end{lemma}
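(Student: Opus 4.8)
The plan is to give a concrete model for $\daleth(S^n)$ and then compute everything with Mayer--Vietoris. Writing $\mathbb{Z}/2=\{1,\sigma\}$, I first observe that $\daleth(S^n)=\Delta\cup\Gamma$, where $\Delta=\{(x,x)\}$ is the ordinary diagonal and $\Gamma=\{(x,\sigma x)\}$ is the graph of the involution: among the four choices of $(g_1,g_2)$, the pair $(\sigma x,\sigma x)$ again sweeps out $\Delta$ and $(\sigma x,x)$ again sweeps out $\Gamma$. Both $\Delta$ and $\Gamma$ are homeomorphic to $S^n$ via $x\mapsto(x,x)$ and $x\mapsto(x,\sigma x)$, and a point lies in $\Delta\cap\Gamma$ precisely when $x=\sigma x$, so $\Delta\cap\Gamma\cong(S^n)^{\mathbb{Z}/2}\cong S^r$. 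As these are subcomplexes of the product $\mathbb{Z}/2$-CW structure, the couple $(\Delta,\Gamma)$ is excisive and Mayer--Vietoris applies.

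\textbf{Part (1).} I would feed the triple $(\Delta,\Gamma,\Delta\cap\Gamma)\cong(S^n,S^n,S^r)$ into the Mayer--Vietoris sequence with $\mathbb{Z}/2$-coefficients,
\[ \cdots \to H^{i-1}(S^r) \xrightarrow{\ \partial\ } H^i(\daleth) \to H^i(S^n)\oplus H^i(S^n) \to H^i(S^r) \to \cdots, \]
and read off the answer degree by degree (recall $n\geq r+2\geq 2$ here). The degree-zero restriction map is the difference of the two restrictions, its kernel gives $H^0(\daleth)=\mathbb{Z}/2$. In top degree both copies of $H^n(S^n)=\mathbb{Z}/2$ survive, because $H^{n-1}(S^r)=0=H^n(S^r)$ (this uses $r\leq n-2$), yielding $H^n(\daleth)=\mathbb{Z}/2\oplus\mathbb{Z}/2$; all intermediate degrees vanish for dimensional reasons. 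The only bookkeeping subtlety is the class in degree $r+1$: for $r\geq 1$ it is the image of the connecting map $\partial$ out of $H^r(S^r)=\mathbb{Z}/2$ (the flanking $H^r(S^n)^2$ and $H^{r+1}(S^n)^2$ both vanish), whereas for $r=0$ the sphere $S^0$ is disconnected and the class instead appears as the cokernel of the degree-zero restriction map $H^0(S^n)^2\to H^0(S^0)$. This migration of the extra cohomology between an image of $\partial$ and a cokernel is the one case I would treat separately.

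\textbf{Part (2).} The useful by-product of the top-degree computation is that the Mayer--Vietoris restriction map $H^n(\daleth)\xrightarrow{\ \cong\ }H^n(\Delta)\oplus H^n(\Gamma)$ is an isomorphism, so I would identify each class by its pair of restrictions. Writing $p_1,p_2\colon S^n\times S^n\to S^n$ for the projections, we have $\alpha\otimes 1=p_1^*\alpha$ and $1\otimes\alpha=p_2^*\alpha$. Composing $j$ with the parametrizations $x\mapsto(x,x)$ and $x\mapsto(x,\sigma x)$ shows that on $\Delta$ both classes restrict to $\id^*\alpha=\alpha$, while on $\Gamma$ the class $\alpha\otimes 1$ restricts to $\id^*\alpha=\alpha$ and $1\otimes\alpha$ restricts to $\sigma^*\alpha$.

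\textbf{The crux.} The only genuinely nontrivial point is that $\sigma^*$ acts as the identity on $H^n(S^n;\mathbb{Z}/2)$. This is exactly where the orientation-reversing hypothesis becomes harmless: although $\sigma$ may act by $-1$ on $H^n(S^n;\mathbb{Z})$, the group $H^n(S^n;\mathbb{Z}/2)\cong\mathbb{Z}/2$ admits only the identity automorphism, so $\sigma^*\alpha=\alpha$. Consequently both $j^*(\alpha\otimes 1)$ and $j^*(1\otimes\alpha)$ restrict to the pair $(\alpha,\alpha)$, and since the restriction map is injective they are equal; they are nonzero because their restriction to $\Delta$ is the fundamental class $\alpha\neq 0$. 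I expect the Mayer--Vietoris bookkeeping in part (1) to be routine apart from the $r=0$ edge case, and the substance of the lemma to reside in this last observation that orientation reversal is invisible mod $2$.
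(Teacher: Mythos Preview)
Your proof is correct and follows essentially the same strategy as the paper's: decompose $\daleth(S^n)$ as the union of the diagonal and the ``twisted diagonal'' (the paper writes $\daleth_e$ and $\daleth_g$ for your $\Delta$ and $\Gamma$), apply Mayer--Vietoris for part~(1), and for part~(2) identify $j^*$ via restriction to the two pieces, the key point being that $\sigma^*=\id$ on $H^n(S^n;\zmod{2})$. One small imprecision: you assert $(S^n)^{\zmod{2}}\cong S^r$, but the lemma does not assume linearity, so the fixed set is only a mod~$2$ cohomology $r$-sphere by Smith theory (which the paper invokes explicitly); this is all the Mayer--Vietoris computation needs, so nothing is lost.
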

\begin{proof}
(1) Let $\zmod{2}=\{e,g\}$, with $g$ the generator, and define
\[ \daleth_{\kappa}(S^n) = \big\{(\kappa x,x) \in S^n\times S^n \suchthat x\in S^n\big\} \textnormal{ for $\kappa = e,g$}. \]
Observe
that $\daleth(S^n) = \daleth_e(S^n) \cup \daleth_g(S^n)$. Indeed, given $(\kappa_1 x, \kappa_2 x) \in \daleth(S^n)$, set $\tilde{\kappa} = \kappa_1\kappa_2$ and $x'=\kappa_2 x$, so that $(\kappa_1 x, \kappa_2 x) = (\tilde{\kappa}x',x')$. This shows that $\daleth(S^n) = \big\{(\kappa x, x)\in S^n\times S^n \,|\, \textnormal{$\kappa \in \zmod{2}$ and $x \in S^n$} \big\}$, and the latter clearly decomposes as claimed.

Since the action is assumed to be cellular, we can push-forward the CW-structure of $S^n$ to $\daleth_\kappa(S^n)$ via homeomorphism $x\mapsto (\kappa x,x)$. The intersection $\daleth_e(S^n)\cap\daleth_g(S^n)\cong (S^n)^{\zmod{2}}$ is then a common subcomplex and has the $\operatorname{mod}2$ cohomology of a sphere by Smith theory. The conclusion now follows from the Mayer--Vietoris sequence corresponding to the decomposition of $\daleth(S^n)$.

(2) We can identify $H^n\big(\daleth(S^n);\zmod{2}\big)$ with $H^n\big(\daleth_{e}(S^n);\zmod{2}\big)\oplus H^n\big(\daleth_{g}(S^n);\zmod{2}\big)$, as shown above.
The composition \[H^n(S^n\times S^n;\zmod{2})\xrightarrow{j^*} H^n(\daleth(S^n);\zmod{2})\xrightarrow{\cong} H^n\big(\daleth_e(S^n);\zmod{2}\big) \oplus H^n\big(\daleth_g(S^n);\zmod{2}\big)\]
is given by $\xi \mapsto j_e^*(\xi) + j_g^*(\xi)$, where $j_{\kappa} \colon \daleth_{\kappa}(S^n) \to S^n \times S^n$ denotes the inclusion, $\kappa=e,g$.
But $j_{\kappa} \colon \daleth_{\kappa}(S^n) \to S^n\times S^n$ factors as
\[ \daleth_{\kappa}(S^n) \xrightarrow{\operatorname{proj}_2} S^n \xrightarrow{\,\Delta\,} S^n\times S^n \xrightarrow{\kappa\times \id} S^n\times S^n, \]
which induces the same map as $\Delta$ on $\operatorname{mod}2$ cohomology, and the latter is given by $\Delta^*(\alpha\otimes 1) = \Delta^*(1\otimes \alpha)=\alpha$.
\end{proof}
We will denote $j^*(\alpha\otimes 1)=j^*(1\otimes\alpha)=\beta$ from now on.
\pagebreak[2]
\begin{lemma}\label{lemma:orientation_reversing}
If $\zmod{2}$ acts on $S^n$ with an $r$-\hspace{0pt}dimensional fixed point set, \mbox{$0 \leq  r \leq  n-2$}, then $\TC{\zmod{2}}{\infty}(S^n)\geq  2$.
\end{lemma}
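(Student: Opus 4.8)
The plan is to prove the stronger statement that $\TC{\zmod{2}}{k}(S^n)\geq 2$ for \emph{every} $k\geq 1$. Since $\TC{\zmod{2}}{\infty}(S^n)$ is the eventual (hence minimal) value of the non-increasing sequence $\big(\TC{\zmod{2}}{k}(S^n)\big)_{k}$ (Lemma~\ref{lemma:monoticity}(2)), a uniform bound of this kind gives the claim. As recorded in the proof of Theorem~\ref{thm:effectiveTC_lower_bound}, $\TC{\zmod{2}}{k}(S^n)=\secat(\pi_k)$, and Schwarz's bound (Remark~\ref{remark:TC_lower_bound}) gives $\secat(\pi_k)>\nilker\pi_k^*$. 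So it suffices to exhibit, for each $k$, a single non-zero class in $\ker\big[\pi_k^*\colon H^*(S^n\times S^n;\zmod{2})\to H^*(\P_k(S^n);\zmod{2})\big]$.

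The candidate is the standard zero-divisor $\alpha\otimes 1+1\otimes\alpha$, where $\alpha$ is the $\operatorname{mod}2$ fundamental class; it is non-zero in $H^n(S^n\times S^n;\zmod{2})$ for $n\geq 1$. To see that it lies in $\ker\pi_k^*$ I would transport $\alpha$ along the chain of paths. For $1\leq i\leq k$ let $\operatorname{pr}_i\colon\P_k(S^n)\to PS^n$ be the projection onto the $i$-th path and write $\operatorname{ev}_t\colon PS^n\to S^n$ for evaluation at time $t$; set $a_i=\operatorname{pr}_i^*\operatorname{ev}_0^*\alpha$ and $b_i=\operatorname{pr}_i^*\operatorname{ev}_1^*\alpha$. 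Two observations drive the argument. First, $\operatorname{ev}_0\simeq\operatorname{ev}_1$ via $\gamma\mapsto\gamma(t)$, so $a_i=b_i$ for all $i$. Second, for $1\leq i\leq k-1$ the \emph{junction map} $\mu_i=(\operatorname{ev}_1\circ\operatorname{pr}_i,\,\operatorname{ev}_0\circ\operatorname{pr}_{i+1})\colon\P_k(S^n)\to S^n\times S^n$ factors through $\daleth(S^n)$, by the orbit-matching condition $G\gamma_i(1)=G\gamma_{i+1}(0)$ built into the definition of $\P_k$. Writing $\mu_i=j\circ\bar\mu_i$ and invoking the equality $j^*(\alpha\otimes1)=j^*(1\otimes\alpha)$ from Lemma~\ref{lemma:daleth}(2), I get $b_i=\mu_i^*(\alpha\otimes 1)=\mu_i^*(1\otimes\alpha)=a_{i+1}$. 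Chaining the two families of equalities yields $a_1=b_1=a_2=\cdots=b_k$, and in particular $a_1=b_k$.

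Since $\pi_k=(\operatorname{ev}_0\circ\operatorname{pr}_1,\,\operatorname{ev}_1\circ\operatorname{pr}_k)$, we have $\pi_k^*(\alpha\otimes1)=a_1$ and $\pi_k^*(1\otimes\alpha)=b_k$, whence $\pi_k^*(\alpha\otimes1+1\otimes\alpha)=a_1+b_k=2a_1=0$ with $\zmod{2}$-coefficients. Thus $\nilker\pi_k^*\geq 1$ and $\TC{\zmod{2}}{k}(S^n)\geq 2$, as required. The subtlety I expect to be the main point — and the reason this needs more than the $k=2$ computation underlying $TC^{\zmod{2}}$ — is precisely that the bound must hold \emph{uniformly in $k$}: effective complexity is the limit of a decreasing sequence, so controlling a single level says nothing about the limit. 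The mechanism making the bound uniform is the relay above, which shuttles the fundamental class across every junction using only the identity in Lemma~\ref{lemma:daleth}(2); notably it uses neither $\beta\neq 0$ nor the full cohomology of $\daleth(S^n)$ from Lemma~\ref{lemma:daleth}(1).
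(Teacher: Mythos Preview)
Your proof is correct and takes a genuinely more elementary route than the paper's. Both arguments establish that the zero-divisor $\alpha\otimes 1+1\otimes\alpha$ lies in $\ker\pi_k^*$ for every $k\geq 2$, but the mechanisms differ. The paper builds a ladder of Serre spectral sequences for the fibrations $(\Omega S^n)^k\to\P_k(S^n)\to S^n\times\daleth(S^n)^{k-1}\times S^n$ and $(\Omega S^n)^k\to(PS^n)^k\to(S^n\times S^n)^k$, identifies the transgression $\tilde d(\gamma)=\alpha\otimes 1+1\otimes\alpha$ in the basic path fibration, and then uses naturality under $\bar\jmath=\id\times j^{k-1}\times\id$ to compute the differential $d$ on the $\P_k$ side, obtaining $\sum_{i=1}^k d(\gamma_i)=\alpha_1+\alpha_{k+1}$ and hence $\alpha_1+\alpha_{k+1}\in\im d=\ker\pi_k^*$. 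You bypass the spectral sequences entirely with the relay $a_1=b_1=a_2=\cdots=b_k$: the equalities $a_i=b_i$ come from the contractibility of $PS^n$ (via $\operatorname{ev}_0\simeq\operatorname{ev}_1$), and the equalities $b_i=a_{i+1}$ come from factoring the junction maps through $j$ and invoking only the \emph{equality} $j^*(\alpha\otimes 1)=j^*(1\otimes\alpha)$ from Lemma~\ref{lemma:daleth}(2). These two ingredients are exactly what the spectral-sequence computation encodes, but you have extracted them and used them directly. The gain is brevity and transparency; the paper's approach, by contrast, yields more of the structure of $H^*(\P_k(S^n);\zmod{2})$ along the way (the full image of $d$, not just one element in it), which could matter for finer questions but is unnecessary for this lemma. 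As you observe, your argument needs neither $\beta\neq 0$ nor the computation in Lemma~\ref{lemma:daleth}(1).
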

During the course of the proof we will use the following subscript convention:
if $\ell\geq  1$ and $\xi \in H^*(X;\zmod{2})$, then
\[ \xi_i = 1\otimes \cdots \otimes 1 \otimes\underset{\textnormal{$i$-th}}\xi\otimes 1 \otimes\cdots\otimes 1\in H^*(X;\zmod{2})^{\otimes \ell} \textnormal{ for $1\leq  i \leq  \ell$.}\]
\begin{proof}
As explained right after the proof of Proposition  \ref{prop:effectiveTC_zp_spheres}, the conclusion is \emph{a fortiori} true for orientation-preserving actions, hence we restrict our attention to orientation-reversing involutions. Furthermore, since any action on $S^2$ is equivalent to an orthogonal one, we can assume that $n\geq  3$. Indeed, if an orthogonal action on $S^2$ has a $0$-\hspace{0pt}dimensional fixed point set, then it clearly preserves orientation.

Let $k\geq  2$. Consider the following commutative diagram.
\begin{center}
\begin{tikzpicture}
 \node (OSnUp) at (-5,2) {$(\Omega S^n)^k$};
 \node (OSnDown) at (-5,0) {$(\Omega S^n)^k$};
 \node (OSn) at (-5,-2) {$\Omega S^n$};
 \node (PkSn) at (-2.5,2) {$\P_k(S^n)$};
 \node (PSnk) at (-2.5,0) {$(PS^n)^k$};
 \node (PSn) at (-2.5,-2) {$PS^n$};
  \node (Sn-Daleth-Sn) at (1.5,2) {$S^n\times \daleth(S^n)^{k-1}\times S^n$};
  \node (SnSnk) at (1.5,0) {$(S^n\times S^n)^k$};
\node (SnSn)  at (1.5,-2) {$S^n\times S^n$};
  \draw [->] (OSnUp) edge (PkSn);
  \draw [->] (OSnUp) edge node [left] {$\id$} (OSnDown);
  \draw [->] (OSnDown) edge (PSnk);
  \draw [->] (PkSn) edge node [left] {$f$}(PSnk);
  \draw [->] (PkSn) edge (Sn-Daleth-Sn);
  \draw [->] (Sn-Daleth-Sn) edge node [right] {$\bar{\jmath}=\id\times j^{k-1}\times \id$} (SnSnk);
  \draw [->] (PSnk) edge (SnSnk);
  \draw [->] (PSnk) edge (SnSn);
  \draw [->] (SnSnk) edge node [right] {$\operatorname{proj}_{1, 2k}$}(SnSn);
  \draw [->] (OSn) edge (PSn);
  \draw [->] (PSn)edge (SnSn);
 \end{tikzpicture}
\end{center}
The top row arises as a restriction of the middle one, which is the $k$-\hspace{0pt}fold product of the path space fibration $\pi_1$, which in turn is represented in the bottom row. The map $f \colon \P_k(S^n)\to (PS^n)^k$ is the inclusion, and the unmarked oblique map is given by $(\gamma_1, \ldots, \gamma_k) \mapsto \big(\gamma_1(0),\gamma_k(1)\big)$ for any $(\gamma_1,\ldots, \gamma_k) \in (PS^n)^k$.

Note that the composition $\P_k(S^n) \to (PS^n)^k \to S^n \times S^n$ coincides with~$\pi_k$, thus as soon as we understand the kernel of $f^*$, we will be able to infer information about the kernel of $\pi_k^*$. In order to do so, we will analyse and compare Serre spectral sequences corresponding to fibrations in the diagram above.

Let $\big(\widetilde{E}_{*}^{*,*},\tilde{d}_*\big)$, $\big(\bar{E}_{*}^{*,*},\bar{d}_*\big)$ and $\big(E_*^{*,*},d_*\big)$ denote the $\operatorname{mod}2$ cohomology Serre spectral sequences corresponding to fibrations in the bottom, middle, and top rows of the diagram, respectively. As these fibrations either have simply connected bases or are pull-backs of such, they are oriented, hence spectral sequences in question have untwisted coefficients.

Recall that
\[ H^\ell(\Omega S^n;\zmod{2})\cong
\begin{cases}
\zmod{2}, & \ell\equiv 0\textnormal{\,mod\,$(n-1)$},\\
0, & \textnormal{otherwise.}
\end{cases}
\]
Therefore, provided that $r\geq  1$, each of these sequences has only one non-trivial differential targeting a single non-zero entry on the $n$-\hspace{0pt}th diagonal; denote those by $\tilde{d}$, $\bar{d}$ and $d$, respectively. This justifies the isomorphisms drawn in the diagram below.
\begin{center}
\begin{tikzpicture}[yscale = 0.8]
\node (HnSnSn) at (0,4) {$H^n(S^n\times S^n;\zmod{2})$};
\node (HnPSnk) at (3,2) {$H^n\big((PS^n)^k;\zmod{2}\big)$};
\node [right = -0.1cm of HnPSnk] (HnPSnkiso) {$\cong H^n\big((S^n\times S^n)^{k};\zmod{2}\big)\!\big/\!\im \bar{d}$};
\node (HnPkSn) at (0,0) {$H^n\big(\P_k(S^n);\zmod{2}\big)$};
\node [right = -0.1cm of HnPkSn] (HnPkSniso) {$\cong H^n\big(S^n\times \daleth(S^n)^{k-1}\times S^n;\zmod{2}\big)\!\big/\!\im d$};
\draw [->] (HnPSnk) edge node [above, sloped] {$f^*$} (HnPkSn);
\draw [->] (HnSnSn) edge (HnPSnk);
\draw [->] (HnSnSn) edge node [left] {$\pi_k^*$} (HnPkSn);
\end{tikzpicture}
\end{center}
\noindent If $r=0$, there possibly is one more non-zero entry on the $n$-\hspace{0pt}th diagonal for $\big(E_*^{*,*},d_*\big)$, but this will not be an issue, as $H^n\big(S^n\times \daleth(S^n)^{k-1}\times S^n;\zmod{2}\big)/\im d$ still appears as a direct summand of $H^n\big(\P_k(S^n);\zmod{2}\big)$.

In both cases our task is to identify the relevant differentials. Let $\gamma \in H^{n-1}(\Omega S^n;\zmod{2})$ be the generator. Consider $\big(\widetilde{E}_{*}^{*,*},\tilde{d}_*\big)$ first. The differential $\tilde{d} \colon \widetilde{E}_n^{0,n-1} \to \widetilde{E}_n^{n,0}$ is given by $\tilde{d}(\gamma) = \alpha\otimes 1 + 1\otimes\alpha$. Indeed, it is well-known that $\pi_1^*$ can be expressed as the edge homomorphism
\[ H^n(S^n\times S^n;\zmod{2}) \cong \widetilde{E}_n^{n,0} \to \widetilde{E}_\infty^{n,0} \cong H^n(S^n\times S^n; \zmod{2})\!\big/\!\im\tilde{d} \cong H^n(PS^n;\zmod{2}), \]
thus its kernel is precisely the image of $\tilde{d}$. But $\pi_1^*$ is also easily seen to coincide with $\Delta^*$. Therefore,
\[ \pi_1^*(\alpha\otimes 1 + 1\otimes\alpha) =
\Delta^*(\alpha\otimes 1)+\Delta^*(1\otimes\alpha) = 2\alpha =0 \in H^n(PS^n;\zmod{2}),\]
so that $\alpha\otimes 1 + 1\otimes \alpha \in \im \tilde{d}$.
Consequently, identifying $H^*\big((S^n\times S^n)^k;\zmod{2}\big)$ with $H^*\big((S^n)^{2k};\zmod{2}\big)$, we see that
\begin{multline*}
\bar{d} \colon \bar{E}_n^{0,n-1} \cong H^0\Big((S^n)^{2k};H^{n-1}\big((\Omega S^n)^k;\zmod{2}\big)\Big)
\to\\
\to H^{n}\Big((S^n)^{2k};H^0\big((\Omega S^n)^k;\zmod{2}\big)\Big)\cong\bar{E}_n^{n,0}
\end{multline*}
is given by $\bar{d}(\gamma_i) = \alpha_{2i-1} + \alpha_{2i}$
for $1\leq  i\leq  k$.

Now recall that the map
$(\id,f,\bar{\jmath})$ of fibrations induces a map $f_n \colon \bar{E}_n^{*,*} \to E_n^{*,*}$
which satisfies $d\circ f_n=f_n\circ\bar{d}$ by naturality of spectral sequences.
\begin{center}
\begin{tikzpicture}
    [axis-label/.style={font=\scriptsize},
    helpline/.style={thin,blue!20}]
    \coordinate (P1) at (-7,1.5); 
    \coordinate (P2) at (9,1.5); 
    \coordinate (A1) at (0,3); 
    \coordinate (A2) at (0,-3); 
    \coordinate (A3) at ($(P1)!.6!(A2)$); 
    \coordinate (A4) at ($(P1)!.6!(A1)$);
    \coordinate (A7) at ($(P2)!.6!(A2)$);
    \coordinate (A8) at ($(P2)!.6!(A1)$);
    \coordinate (A5) at
      (intersection cs: first line={(A8) -- (P1)},
                second line={(A4) -- (P2)});
    \coordinate (A6) at
      (intersection cs: first line={(A7) -- (P1)},
                second line={(A3) -- (P2)});
    \def \n {7.5}
    \def \k {5}
    \begin{scope}[] 
    \draw (A1) node [above left] {$E_{n}^{*,*}$};
    \begin{scope}
    \begin{scope}
        \foreach \x in {0.5,1.5,...,\n}
        {
        \draw [helpline] ($(A3)!\x/(\n+0.5)!(A2)$) -- ($(A4)!\x/(\n+0.5)!(A1)$);
        \draw [helpline] ($(A3)!\x/(\n+0.5)!(A4)$) -- ($(A2)!\x/(\n+0.5)!(A1)$);
        }
        \coordinate (e1) at ($(A2)!1/(\n+0.5)!(A1)$);
        \coordinate (e2) at ($(A2)!0/(\n+0.5)!(A1)$);
        \coordinate (e3) at
        (intersection cs: first line = {(e1)--(P1)},
                        second line = {(A3)--(A4)});
        \coordinate (e4) at
        (intersection cs: first line = {(e2)--(P1)},
                        second line = {(A3)--(A4)});
        \fill[gray!50, opacity=0.4] (e1)--(e2)--(e4)--(e3)--cycle;

        \coordinate (E1) at ($(A2)!\k/(\n+0.5)!(A1)$);
        \coordinate (E2) at ($(A2)!\k/(\n+0.5) + 1 /(\n+0.5) !(A1)$);
        \coordinate (E3) at
        (intersection cs: first line = {(E1)--(P1)},
                        second line = {(A3)--(A4)});
        \coordinate (E4) at
        (intersection cs: first line = {(E2)--(P1)},
                        second line = {(A3)--(A4)});
        \fill[gray!50,opacity=0.4] (E1)--(E2)--(E4)--(E3)--cycle;
    \end{scope}
    \node (E-0row) [axis-label, left, align=right] at ($(e3)!.5!(e4)$) {$0$};
    \node (E-n-1row) [axis-label, left, align=right] at ($(E3)!.5!(E4)$) {$n-1$};
    \draw[->] (A3) -- ($(A3)!1.05!(A2)$);
    \draw[->] (A3) -- ($(A3)!1.05!(A4)$);
    \end{scope}
    \node (E-diff-source) at
        (intersection cs:
        first line = {($(A2)!\k/(\n+0.5) + 0.5/(\n+0.5)!(A1)$)--
                      ($(A3)!\k/(\n+0.5) + 0.5/(\n+0.5)!(A4)$)},
       second line = {($(A3)!0.5/(\n+0.5)!(A2)$)--
                      ($(A4)!0.5/(\n+0.5)!(A1)$)})
        {$\bullet$};
    \node (E-diff-destination) at
        (intersection cs:
        first line = {($(A3)!\k/(\n+0.5) + 1.5/(\n+0.5)!(A2)$)--
                      ($(A4)!\k/(\n+0.5) + 1.5/(\n+0.5)!(A1)$)},
       second line = {($(A3)!0.5/(\n+0.5)!(A4)$)--
                      ($(A2)!0.5/(\n+0.5)!(A1)$)})
        {$\bullet$};
    \draw[->] (E-diff-source) edge node [axis-label,above,sloped] {$d$} (E-diff-destination);
    \end{scope}
    \begin{scope}[opacity=.7] 
        \draw (A8) node [above left] {$\bar{E}_{n}^{*,*}$};
        \draw[->] (A6) -- ($(A6)!1.05!(A5)$);
        \draw[->] (A6) -- ($(A6)!1.05!(A7)$);
        \begin{scope}
            \foreach \x in {0.5,1.5,...,\n}
            {
            \draw [helpline,opacity=0.7] ($(A6)!\x/(\n+0.5)!(A7)$) -- ($(A5)!\x/(\n+0.5)!(A8)$);
            \draw [helpline,opacity=0.7] ($(A6)!\x/(\n+0.5)!(A5)$) -- ($(A7)!\x/(\n+0.5)!(A8)$);
            }
            \coordinate (eb1) at
            (intersection cs: first line = {(e1)--(P2)},
                            second line = {(A7)--(A8)});
            \coordinate (eb2) at
            (intersection cs: first line = {(e2)--(P2)},
                            second line = {(A7)--(A8)});
            \coordinate (eb3) at
            (intersection cs: first line = {(e3)--(P2)},
                            second line = {(A6)--(A5)});
            \coordinate (eb4) at
            (intersection cs: first line = {(e4)--(P2)},
                            second line = {(A6)--(A5)});
            \fill[gray!50,opacity=0.4] (eb1)--(eb2)--(eb4)--(eb3)--cycle;

            \coordinate (Eb1) at
            (intersection cs: first line={(E1)--(P2)},
                            second line={(A7)--(A8)});
            \coordinate (Eb2) at
            (intersection cs: first line={(E2)--(P2)},
                            second line={(A7)--(A8)});
            \coordinate (Eb3) at
            (intersection cs: first line={(E3)--(P2)},
                            second line={(A6)--(A5)});
            \coordinate (Eb4) at
            (intersection cs: first line={(E4)--(P2)},
                            second line={(A6)--(A5)});
            \fill[gray!50,opacity=0.4] (Eb1)--(Eb2)--(Eb4)--(Eb3)--cycle;
        \end{scope}
        \node (Eb-0row) [axis-label, right] at ($(eb1)!.5!(eb2)$) {$0$};
        \node (Eb-n-1row) [axis-label, right] at ($(Eb1)!.5!(Eb2)$) {$n-1$};
        \node (Eb-diff-source) at
            (intersection cs:
            first line = {(E-diff-source)--(P2)},
            second line = {($(A6)!0.5/(\n+0.5)!(A7)$)--
                        ($(A5)!0.5/(\n+0.5)!(A8)$)})
            {$\bullet$};
        \node (Eb-diff-destination) at
            (intersection cs:
            first line = {($(A6)!\k/(\n+0.5) + 1.5/(\n+0.5)!(A7)$)--
                        ($(A5)!\k/(\n+0.5) + 1.5/(\n+0.5)!(A8)$)},
        second line = {($(A6)!0.5/(\n+0.5)!(A5)$)--
                        ($(A7)!0.5/(\n+0.5)!(A8)$)})
            {$\bullet$};
        \draw[->] (Eb-diff-source) edge node [axis-label,above,sloped] {$\bar{d}$} (Eb-diff-destination);
    \end{scope}
    \draw[->,bend right=25] (Eb-diff-source) edge node [axis-label,above,sloped] {$f_n=\id$} (E-diff-source);
    \draw[->,bend right=15] (Eb-diff-destination) edge node [axis-label,above,sloped] {$f_n=\bar{\jmath}$} (E-diff-destination);
\end{tikzpicture}
\end{center}

\noindent Observe that $f_n \colon \bar{E}_n^{0,n-1} \to E_n^{0,n-1}$
is the identity and $f_n \colon \bar{E}_n^{n,0}\to E_n^{n,0}$ is induced by the inclusion $\bar{\jmath}$. Therefore $d=\bar{\jmath}^*\circ\bar{d}$ and, with slight abuse of the subscript notation, we can write that
\[
d(\gamma_i) =
\bar{\jmath}^*(\alpha_{2i-1}+\alpha_{2i}) =
\begin{cases}
\alpha_1 + \beta_2, & i=1, \\
\beta_i + \beta_{i+1}, & 2 \leq  i \leq  k-1, \\
\beta_k + \alpha_{k+1}, & i=k,
\end{cases}\]
where we identified
\[H^*\big(S^n \times \daleth(S^n)^{k-1}\times S^n;\zmod{2}\big) \cong H^*(S^n;\zmod{2})\otimes H^*\big(\daleth(S^n);\zmod{2}\big)^{\otimes (k-1)} \otimes H^*(S^n;\zmod{2}). \]

Since our aim is to understand $\pi_k^*$, we will be interested in behaviour of $f^*$ on the elements $\alpha_1+\im\bar{d}$, $\alpha_{2k} +\im\bar{d} \in H^n\big((PS^n)^k;\zmod{2}\big)$, i.e. those which come from $H^n(S^n\times S^n;\zmod{2})$ by means of the oblique map in the diagram. Obviously, we have:
\begin{align*}
f^*(\alpha_1 + \im\bar{d}\,) &= \alpha_1 + \im d, \\
f^*(\alpha_{2k} + \im\bar{d}\,) &= \alpha_{k+1} + \im d.
\end{align*}
But, per our computations,
$\sum_{i=1}^k d(\gamma_i) = \alpha_1 + \alpha_{k+1}$,
thus $(\alpha_1 + \alpha_{2k}) + \im\bar{d} \in \ker f^*$ and the conlusion follows (see Remark \ref{remark:TC_lower_bound}).
\end{proof}

Given non-antipodal points $x$, $y \in S^n$, let $s'(x,y)$ denote the unique shortest arc connecting $x$ and $y$ traversed with constant velocity. This notation will be useful in proofs of Propositions \ref{prop:orientation_reversing} and \ref{prop:tc_reflection}.

\begin{proposition}\label{prop:orientation_reversing}
Suppose that $\zmod{2}$ acts on $S^n$ reversing orientation and with an $r$-dimensional fixed point set, $0 \leq r\leq n-2$. If: 
\begin{enumerate}
\item[\textnormal{(1)}] $n$ is odd, or
\item[\textnormal{(2)}] $n$ is even and the action is linear,
\end{enumerate}
then $\TC{\zmod{2}}{\infty}(S^n)=2$.
\end{proposition}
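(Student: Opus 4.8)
The plan is to prove the matching upper bound $\TC{\zmod{2}}{\infty}(S^n)\le 2$, since Lemma~\ref{lemma:orientation_reversing} already supplies $\TC{\zmod{2}}{\infty}(S^n)\ge 2$ whenever $0\le r\le n-2$. Case~(1) is then immediate: for $n$ odd one has $TC(S^n)=2$, and since $\TC{\zmod{2}}{\infty}(S^n)\le\TC{\zmod{2}}{1}(S^n)=TC(S^n)$, the two bounds coincide with no further use of the fixed-point data.

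For case~(2) I would exhibit an explicit cover of $S^n\times S^n$ by two open sets carrying $(\zmod{2},2)$-motion planners; as $\TC{\zmod{2}}{\infty}\le\TC{\zmod{2}}{2}$, this forces the bound. Write the action as $x\mapsto Ax$ with $A\in O(n+1)$, $A^2=\id$, and split $\mathbb{R}^{n+1}=V_+\oplus V_-$ into the $(\pm 1)$-eigenspaces, so that $g=+\id$ on $V_+$, $g=-\id$ on $V_-$, the fixed sphere is $S(V_+)=S^r$, and $\deg g=(-1)^{\dim V_-}$. The key observation is that orientation reversal ($\dim V_-$ odd) together with $n$ even forces $\dim V_+$ to be \emph{even}; hence $V_+$ carries an orthogonal complex structure $J$ (that is, $J^2=-\id_{V_+}$ and $J^{\mathsf t}=-J$), which I fix once and for all.

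The two sets are $W_1=\{(x,y):y\ne -x\}$ and $W_2=\{(x,y):y\ne x\}$; they cover $S^n\times S^n$ since $y=x$ and $y=-x$ cannot hold at once. On $W_1$ I use the honest shortest-arc planner $(x,y)\mapsto\bigl(s'(x,y),\const_y\bigr)$, a $(\zmod{2},2)$-path with a trivial leap. On $W_2$, writing $x=x_++x_-$ with $x_\pm\in V_\pm$, I rotate the $V_+$-component to its antipode along the half great circle
\[ \gamma_1(x)(t)=\cos(\pi t)\,x_++\sin(\pi t)\,Jx_++x_-, \]
which stays on $S^n$ and satisfies $\gamma_1(x)(0)=x$, $\gamma_1(x)(1)=-x_++x_-$. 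Applying the symmetry $g$ to this endpoint lands on $g(-x_++x_-)=-x$, and I finish with $s'(-x,y)$, legal precisely when $y\ne x$. Thus $(x,y)\mapsto\bigl(\gamma_1(x),s'(-x,y)\bigr)$ is a $(\zmod{2},2)$-motion planner on $W_2$, and both bounds yield $\TC{\zmod{2}}{\infty}(S^n)=2$.

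The crux — and essentially the only point of substance — is continuity of $\gamma_1$ across the entire antipodal locus, in particular over both the fixed sphere $S(V_+)$ and the ``coincidence'' sphere $S(V_-)$ on which $gx=-x$. A naive construction (rotating $x$ towards $-x$ along a tangent vector field, or leaping directly via $g$) degenerates on exactly one of these loci, and patching the two runs into an Euler-class obstruction, because the number of points one must avoid jumps where $-x$ and $gx$ collide. The global complex structure $J$ on $V_+$ dissolves this: the displayed formula is manifestly continuous in $x$ (it reduces to the constant path $x_-$ when $x_+=0$, and to a semicircle inside $S^r$ when $x_-=0$), so no patching is ever needed, and I expect verifying this single continuity statement — together with checking that both legs lie in $\mathcal{P}_2(S^n)$ — to be the main technical point. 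Everything else is formal. Note finally that the construction genuinely requires linearity (for the eigenspace splitting and for $J$) and $n$ even (so that $\dim V_+$ is even), which is why case~(1) is handled separately.
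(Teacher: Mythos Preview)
Your proof is correct and rests on the same observation as the paper's: when $n$ is even and the involution reverses orientation, $\dim V_{+}=r+1$ is even, so $V_{+}$ carries an orthogonal complex structure $J$, and this is what makes a two-set $(\zmod{2},2)$-cover possible. The packaging, however, differs. The paper defines an auxiliary fixed-point-free homeomorphism $\tau\colon S^{n}\to S^{n}$ (acting as $g$ on $V_{-}$ and as a quarter-turn $J$ on $V_{+}$), takes the cover $U_{1}=\{y\neq -x\}$, $U_{2}=\{y\neq -\tau(x)\}$, and on $U_{2}$ uses the planner $(x,y)\mapsto\bigl(\const_{x},\,s'(gx,\tau(x))\ast s'(\tau(x),y)\bigr)$, leaping \emph{first} and then travelling by two shortest arcs; the verifications reduce to $\tau(x)\neq x$ and $\tau(x)\neq -gx$. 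You instead keep the simpler cover $\{y\neq -x\}\cup\{y\neq x\}$ and absorb the complex structure into an explicit half-rotation $\gamma_{1}$ inside $V_{+}$, leaping \emph{after} to reach $-x$ and finishing with a single shortest arc. Your route trades the paper's two algebraic checks on $\tau$ for a direct continuity check on $\gamma_{1}$; both are equally short, and the underlying mechanism is the same.
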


\begin{proof} 
(1) This follows immediately from Lemmas \ref{lemma:monoticity} and \ref{lemma:orientation_reversing}, as $TC(S^n)=2$ for odd $n$.

(2) As a consequence of Lemma \ref{lemma:orientation_reversing}, we have $\TC{\zmod{2}}{\infty}(S^n)>1$. We will construct a two-fold open cover $U_1$, $U_2$ of $S^n \times S^n$ by sets that admit $(\zmod{2},2)$-\hspace{0pt}motion planners.

For an auxiliary step, let $\tilde{r}=n-r-1$. The action in question is equivalent to the one given by the involution
\[ g \colon (x_0, \ldots, x_n) \mapsto (-x_0, \ldots, -x_{\tilde{r}}, x_{\tilde{r}+1}. \ldots, x_n),\]
Since it reverses orientation, $\tilde{r}$ is even and the fixed point set $(S^n)^{\zmod{2}} = \big\{(x_0,\ldots,x_n)\in S^n \,|\, x_i=0 \text{ for $i\leq  \tilde{r}$}\big\}$ is odd-dimensional. Define a homeomorphism $\tau \colon S^n \to S^n$ by
\[ \tau(x_0,\ldots, x_n) = (-x_0,\ldots, -x_{\tilde{r}}, -x_{\tilde{r}+2}, x_{\tilde{r}+1},\ldots, -x_n, x_{n-1}).\]
We are now in position to produce the said cover of $S^n \times S^n$. Set:
\begin{align*}
U_1 &= \big\{(x,y) \in S^n\times S^n \suchthat y\neq -x\big\}, \\
U_2 &= \big\{(x,y) \in S^n \times S^n \suchthat y\neq-\tau(x)\big\}.
\end{align*}
It can be easily verified that $(x,-x)\in U_2$ for any $x\in S^n$, hence these sets in fact form an open cover of $S^n \times S^n$. Furthermore, $s'$ is a motion planner on~$U_1$ in the classical sense. As explained in Lemma \ref{lemma:monoticity}, we can consider $s'$ as a $(\zmod{2},2)$-\hspace{0pt}motion planner.

Now define $s_2 \colon U_2 \to \P_2(S^n)$ as follows:
\[ s_2(x,y) = \big(\!\const_x, s'(gx, \tau(x)) * s'(\tau(x),y)\big) \textnormal{ for $(x,y) \in U_2$}.\]
Since $\tau$ is chosen so that $\tau(x)\neq -gx$ for any $x \in S^n$, the map $s_2$ is well-defined. It is straightforward to see that it is also continuous, thus it constitutes a $(\zmod{2},2)$-\hspace{0pt}planner on $U_2$.
\end{proof}

In Propositions \ref{prop:tc_reflection} and \ref{prop:free_involutions}  we deal with the two excluded cases, i.e. co\-di\-men\-sion-one fixed point set and free actions, respectively. Their proofs are essentially easier versions of the argument used for justifying Proposition \ref{prop:orientation_reversing}. Also note that $\zmod{2}$ can be readily replaced with any finite group in Proposition \ref{prop:free_involutions}.

\begin{proposition}\label{prop:tc_reflection}
If $\zmod{2}$ acts on $S^n$ by reflection interchanging the hemispheres \textup{(}i.e. the action is linear and $r=n-1$\textup{)}, then  $TC^{\zmod{2},\infty}(S^n)=1$.
\end{proposition}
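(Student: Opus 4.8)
The plan is to exhibit a single $(\zmod{2},k)$-motion planner defined on all of $S^n\times S^n$; since $\TC{\zmod{2}}{\infty}(S^n)\geq 1$ holds trivially, producing such a global section of $\pi_k$ forces $\TC{\zmod{2}}{k}(S^n)=1$ and hence $\TC{\zmod{2}}{\infty}(S^n)=1$. By $G$-\hspace{0pt}homotopy invariance (Theorem \ref{thm:G_invariance}) I may assume the action is the standard reflection $g(x_0,\ldots,x_n)=(-x_0,x_1,\ldots,x_n)$, whose fixed point set is the equator $\{x_0=0\}\cong S^{n-1}$ and which interchanges the closed hemispheres $H_{\pm}=\{\pm x_0\geq 0\}$. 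I will carry this out with $k=3$; this is the expected cost of insisting on a \emph{single} covering set rather than two, as in Proposition \ref{prop:orientation_reversing}.

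The key device is the folding map $\phi\colon S^n\to H_+$, $\phi(x_0,\ldots,x_n)=(|x_0|,x_1,\ldots,x_n)$, which is continuous and satisfies $\phi(x)\in\zmod{2}\cdot x$ for every $x$. I fix the pole $N=(1,0,\ldots,0)$, which lies in the interior of $H_+$. No point of $H_+$ is antipodal to $N$ (its antipode $-N$ has $x_0=-1$, hence lies outside $H_+$), so $s'(\phi(x),N)$ and $s'(N,\phi(y))$ are defined for all $x,y\in S^n$; moreover each such shortest arc stays inside $H_+$, since every point of $H_+$ is within distance $\pi/2$ of $N$ and the minimizing arc cannot increase that distance. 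I would then define $s\colon S^n\times S^n\to\P_3(S^n)$ by
\[ s(x,y)=\Big(\const_x,\; s'\big(\phi(x),N\big)*s'\big(N,\phi(y)\big),\; \const_y\Big).\]
The outer constant paths contribute only the prescribed endpoints $x$ and $y$, while the middle path runs from $\phi(x)$ to $\phi(y)$ through $N$. The saturated-diagonal conditions defining $\P_3(S^n)$ hold because $\zmod{2}\cdot\phi(x)=\zmod{2}\cdot x$ and $\zmod{2}\cdot\phi(y)=\zmod{2}\cdot y$, and clearly $\pi_3\circ s=\id$.

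It remains to verify continuity, which is where the one genuine subtlety lies. The map $\phi$ is continuous everywhere, including along the equator, where $x=gx$ forces the two competing orbit representatives to coincide; consequently the two orbit-jumps encoded in $s$ vary continuously as $(x,y)$ crosses the fixed point set. Since $s'$ is continuous away from antipodal pairs and the construction has been arranged so that neither shortest arc ever meets such a pair, $s$ is continuous. This yields $\TC{\zmod{2}}{3}(S^n)=1$, and the conclusion follows at once. The main obstacle is precisely the avoidance of the antipodal obstruction of $s'$: folding both endpoints into $H_+$ and routing through the pole $N$ eliminates it entirely, crucially exploiting that a codimension-one fixed set makes the relevant hemisphere convex and its image in the orbit space contractible.
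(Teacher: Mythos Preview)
Your proof is correct and is essentially the paper's own argument: the paper defines $\Gamma(x)\in\zmod{2}$ to be trivial on $S^n_+$ and the generator elsewhere, then sets $s(x,y)=\big(\const_x,\,s'(\Gamma(x)x,N)*s'(N,\Gamma(y)y),\,\const_y\big)$, and your folding map $\phi(x)=(|x_0|,x_1,\ldots,x_n)$ is exactly $x\mapsto\Gamma(x)x$. Your write-up is in fact more explicit about the two points the paper leaves implicit---continuity across the equator and the avoidance of antipodal pairs for $s'$---but the construction is the same.
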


\begin{proof}
Let $\Gamma(x)$ be the trivial element of $\zmod{2}$ if $x \in S^n_+$ and its generator otherwise. Define a $(\zmod{2},3)$-\hspace{0pt}motion planner $s \colon S^n \times S^n \to \mathcal{P}_3(S^n)$ by setting
\[ s(x,y) = \big(\!\const_x, s'(\Gamma(x)x, N) * s'(N,\Gamma(y)y), \const_y\!\big) \textnormal{ for $x$, $y \in S^n$,}\]
where $N \in S^n$ denotes the north pole.
\end{proof}

The argument above can be easily modified to yield:

\begin{corollary}
Let $X$ be a topological space, $A \subseteq X$ its closed subspace. Consider the adjuntion space $X\cup_A X$ equipped with a reflection which interchanges the two copies of $X$. Then $\TC{\zmod{2}}{\infty}(X\cup_A X)\leq  TC(X)$.
In particular, the suspension $\Sigma X$ of a space $X$, equipped with a reflection interchanging the two cones, has $TC^{\zmod{2},\infty}(\Sigma X)=1$.
\end{corollary}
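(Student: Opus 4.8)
The plan is to generalize the construction in the proof of Proposition \ref{prop:tc_reflection}, replacing the contractible hemisphere by a copy of $X$ and the trivial planner on that hemisphere by a genuine motion planner on $X$. Since $\left(\TC{\zmod{2}}{k}(X\cup_A X)\right)_{k=1}^\infty$ is a decreasing sequence, we have $\TC{\zmod{2}}{\infty}(X\cup_A X)\leq \TC{\zmod{2}}{3}(X\cup_A X)$, so it suffices to prove $\TC{\zmod{2}}{3}(X\cup_A X)\leq TC(X)$.

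First I would introduce the fold map. Write $X_1$, $X_2$ for the two copies of $X$ inside $X\cup_A X$, and let $g$ denote the reflection, whose fixed point set is (the common image of) $A=X_1\cap X_2$. The pushout property of the adjunction space yields a continuous retraction $r\colon X\cup_A X\to X_1\cong X$ which is the identity on $X_1$ and the canonical identification on $X_2$; concretely, $r(z)$ is the unique point of the orbit $\zmod{2}\cdot z$ lying in $X_1$. For $z\in A$ the two candidates coincide, which is exactly what makes $r$ continuous despite the case distinction, and this is the only delicate point in the argument. In particular $r(z)\in\zmod{2}\cdot z$ for every $z$, and $r\circ g=r$, so $r$ plays the role of the assignment $x\mapsto\Gamma(x)x$ from Proposition \ref{prop:tc_reflection}.

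Next, fix an open cover $W_1,\ldots,W_\ell$ of $X\times X$ with $\ell=TC(X)$ admitting motion planners $\sigma_j\colon W_j\to PX$, and set $U_j=(r\times r)^{-1}(W_j)$. These are open by continuity of $r$ and cover $(X\cup_A X)^2$ since the $W_j$ cover $X^2$. On $U_j$ I would define
\[ s_j(z,w)=\big(\const_z,\ \sigma_j\big(r(z),r(w)\big),\ \const_w\big), \]
where the middle path is viewed as a path in $X_1\subseteq X\cup_A X$. The compatibility condition $\zmod{2}\cdot\gamma_i(1)=\zmod{2}\cdot\gamma_{i+1}(0)$ holds at both junctions precisely because $r(z)\in\zmod{2}\cdot z$ and $r(w)\in\zmod{2}\cdot w$, so $s_j$ lands in $\P_3(X\cup_A X)$ and satisfies $\pi_3\circ s_j=i_{U_j}$; continuity is immediate from that of $r$, of $\sigma_j$, and of the constant-path assignment. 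Thus each $U_j$ admits a $(\zmod{2},3)$-motion planner, whence $\TC{\zmod{2}}{3}(X\cup_A X)\leq\ell=TC(X)$.

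Finally, the suspension statement follows by applying the inequality with $X$ replaced by the cone $CX$ and $A$ by its base, since $\Sigma X=CX\cup_X CX$ and $TC(CX)=1$ because $CX$ is contractible; together with the trivial bound $\TC{\zmod{2}}{\infty}\geq 1$ this forces $TC^{\zmod{2},\infty}(\Sigma X)=1$. I do not expect any substantive obstacle: the argument is a direct abstraction of Proposition \ref{prop:tc_reflection}, and the two points meriting care are the continuity of the fold map $r$ at $A$ (supplied by the pushout structure) and the verification that the orbit-compatibility conditions defining $\P_3$ are met at the two ``leaps'' where the construction passes between a copy of $X$ and its mirror image.
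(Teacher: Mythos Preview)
Your proof is correct and is precisely the modification the paper has in mind: replace the map $x\mapsto\Gamma(x)x$ by the fold retraction $r$, and replace the single geodesic planner through the north pole by an arbitrary motion planner $\sigma_j$ on one of the covering sets $W_j$ of $X\times X$. The paper offers no further details beyond ``the argument above can be easily modified,'' and you have supplied exactly those details.
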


\begin{proposition}\label{prop:free_involutions}
If $\zmod{2}$ acts freely on $S^n$, then $TC^{\zmod{2}, \infty}(S^n)= 2$.
\end{proposition}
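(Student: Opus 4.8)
The plan is to bound $\TC{\zmod{2}}{\infty}(S^n)$ from below and above by $2$. For the lower bound I would invoke Lemma \ref{lemma:free_actions_lower_bound} with $p=2$: the sphere $S^n$ is a finite-dimensional free $\zmod{2}$-CW complex with finitely generated mod $2$ cohomology, and it is not mod $2$ acyclic because $H^n(S^n;\zmod{2})\neq 0$. This immediately yields $\TC{\zmod{2}}{\infty}(S^n)\geq 2$. For the upper bound, Theorem \ref{thm:effectiveTC_free_actions} reduces the problem to showing $\TC{\zmod{2}}{2}(S^n)\leq 2$, since the action is free and hence $\TC{\zmod{2}}{\infty}(S^n)=\TC{\zmod{2}}{2}(S^n)$.

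To produce a two-element cover, write $g$ for the free involution and set
\[ U_1 = \big\{(x,y)\in S^n\times S^n \suchthat y\neq -x\big\}, \qquad U_2=\big\{(x,y)\in S^n\times S^n\suchthat y\neq -gx\big\}. \]
The covering property is exactly where freeness enters: a pair outside both sets would satisfy $-x=y=-gx$, forcing $gx=x$, which is impossible. On $U_1$ the shortest-arc map $s'$ is a classical motion planner, which I would regard as a $(\zmod{2},2)$-motion planner by appending a constant path exactly as in the proof of Lemma \ref{lemma:monoticity}(2). On $U_2$ I would define
\[ s_2(x,y)=\big(\const_x,\, s'(gx,y)\big), \]
so that the first path rests at $x$, one then leaps to the orbit-mate $gx$ (legitimate, as $\zmod{2}x=\zmod{2}(gx)$), and the second path is the shortest arc from $gx$ to $y$, which is defined precisely because $y\neq -gx$ on $U_2$.

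The essential idea --- and the reason this is an easier variant of Proposition \ref{prop:orientation_reversing} --- is that the leap to $gx$ dissolves the antipodal obstruction without recourse to a tangent vector field; the construction is therefore insensitive to the parity of $n$, giving $\TC{\zmod{2}}{2}(S^n)\leq 2$ even for even $n$, in contrast to the classical value $TC(S^n)=3$. I expect no serious obstacle: the only points requiring care are the covering property, which rests squarely on freeness, and the continuity of $s_2$, which follows at once from continuity of $g$, of $s'$ on non-antipodal pairs, and of the constant-path assignment.
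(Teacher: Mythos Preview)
Your proposal is correct and follows essentially the same argument as the paper: the same invocation of Theorem~\ref{thm:effectiveTC_free_actions} and Lemma~\ref{lemma:free_actions_lower_bound}, the same open sets $U_1$, $U_2$, and the same motion planners $s'$ and $s_2$. Your write-up is in fact a bit more explicit than the paper's, since you spell out why $U_1\cup U_2 = S^n\times S^n$ (the paper merely asserts this).
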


\begin{proof}
In view of Theorem \ref{thm:effectiveTC_free_actions}, we can restrict attention to $(\zmod{2},2)$-\hspace{0pt}motion planners. Lemma \ref{lemma:free_actions_lower_bound} tells us that $\TC{G}{2}(S^n)>1$.
Set:
\begin{align*}
U_1 &= \big\{(x,y) \in S^n \times S^n \suchthat y\neq-x\big\},\\
U_2 &= \big\{(x,y) \in S^n \times S^n \suchthat y\neq-gx \big\}.
\end{align*}
Clearly, $U_1$ and $U_2$ form an open cover of $S^n \times S^n$. A $(\zmod{2},2)$-\hspace{0pt}motion planner on~$U_1$ arises in exactly the same way as in Proposition \ref{prop:orientation_reversing}. Define $s_2 \colon U_2 \to \P_2(S^n)$ as follows:
\[ s_2(x,y) = \big(\!\const_x, s'(gx,y)\big) \textnormal{ for $(x,y) \in U_2$}.\]
\end{proof}

Let us summarise the content of this section below. (We take $r=-1$ to mean that the fixed point set is empty, i.e. the action is free.)

\begin{corollary}
Let $p$ be a prime. Suppose that $\zmod{p}$ acts on $S^n$ with an $r$-\hspace{0pt}dimensional fixed point set, $-1\leq  r \leq  n-1$.
\begin{itemize}
\item If $p>2$, then $\TC{\zmod{p}}{\infty}(S^n) = TC(S^n) = \begin{cases}
2, & \textnormal{$n$ is odd,}\\
3, & \textnormal{$n$ is even, $n>0$.}
\end{cases}$
\item If $p=2$, then $\TC{\zmod{p}}{\infty}(S^n)$ depends on $r$ as follows:

\begin{center}
\begin{minipage}{0.72\textwidth}
\begin{tabular} {@{}ccc@{}}\toprule
& \textsc{O. preserving} & \textsc{O. reversing}\\ \midrule
$r = -1$ & \multicolumn{2}{c}{$\;\;2$}\\ \cmidrule(rl){1-3}
\multirow{4}{*}{$0\leq  r \leq  n-2$}
& \multicolumn{2}{c@{}}{\centering$2$}\\
& \multicolumn{2}{c@{}}{\footnotesize{for $n$ -- odd}}\\\cmidrule(lr){2-3}
& $3$ & $2$\\
& \footnotesize{for $n$ -- even} & \footnotesize{for $n$ -- even, if linear}\\ \cmidrule(rl){1-3}
\multirow{2}{*}{$r=n-1$}& --- & $1$ \\
& \footnotesize{not possible}&\footnotesize{if linear}\\\bottomrule
\end{tabular}
\end{minipage}
\end{center}

\end{itemize}
\end{corollary}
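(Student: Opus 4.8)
The plan is to read each case of the statement off the results already established in this section, since the corollary is precisely their summary; no new machinery is needed, only careful bookkeeping together with one short observation about which configurations can occur.

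First I would dispose of $p>2$ in one stroke, with no case analysis on $r$. A generator $g$ of $\zmod{p}$ is a self-homeomorphism of $S^n$, so $\deg g\in\{\pm1\}$, while $g^p=\id$ forces $(\deg g)^p=1$; for odd $p$ this rules out $\deg g=-1$, so $g$ preserves orientation and acts trivially on rational cohomology regardless of $r$. Hence the hypotheses of Corollary \ref{cor:TC=effectiveTC} are met (via the sharpness of the rational bound recorded for spheres), and Proposition \ref{prop:effectiveTC_zp_spheres} delivers $\TC{\zmod{p}}{\infty}(S^n)=TC(S^n)$, which is the first bullet.

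For $p=2$ I would organise the argument by the pair (value of $r$, orientation behaviour) so as to fill the cells of the table. The free case $r=-1$ is exactly Proposition \ref{prop:free_involutions}, giving the value $2$ for every $n$ and both orientation types. For $0\le r\le n-2$ the lower bound $\TC{\zmod{2}}{\infty}(S^n)\ge 2$ always holds by Lemma \ref{lemma:orientation_reversing} (whose proof treats the orientation-reversing case and notes the preserving case is a fortiori true); combined with $\TC{\zmod{2}}{\infty}(S^n)\le TC(S^n)$ from Lemma \ref{lemma:monoticity} this already pins the value at $2$ whenever $n$ is odd, covering both columns at once. When $n$ is even, an orientation-preserving involution acts trivially on cohomology, so Corollary \ref{cor:TC=effectiveTC} raises the value to $3$, whereas an orientation-reversing linear involution is handled by Proposition \ref{prop:orientation_reversing}(2), yielding $2$. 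The codimension-one reversing case $r=n-1$ is precisely Proposition \ref{prop:tc_reflection}, giving the value $1$.

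The only entry not already packaged as a proposition is the ``not possible'' cell, namely that no involution of $S^n$ can have an $(n-1)$-dimensional fixed point set while preserving orientation; this is the one genuinely new (though minor) point I expect to spell out. For a linear involution it is immediate from eigenvalue counting, since $r=n-1$ forces exactly one $-1$ eigenvalue and hence determinant $-1$. In general one observes that a codimension-one fixed subsphere carries a one-dimensional normal representation on which the involution must act by $-1$, so the map is a local reflection transverse to the fixed set and therefore reverses orientation. With this remark in place, the remainder of the proof is purely the assembly of the cited values into the displayed table, and the recording of $TC(S^n)$ in the $p>2$ row.
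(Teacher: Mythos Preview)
Your proposal is correct and matches the paper's approach exactly: the Corollary is presented there as a summary of the section with no separate proof, and you correctly attribute each cell of the table to the relevant result (Proposition~\ref{prop:effectiveTC_zp_spheres} together with the remark following it, Proposition~\ref{prop:orientation_reversing}, Proposition~\ref{prop:tc_reflection}, Proposition~\ref{prop:free_involutions}, and Lemma~\ref{lemma:orientation_reversing} combined with Lemma~\ref{lemma:monoticity}). The only addition you make is an explicit justification of the ``not possible'' cell, which the paper leaves unargued; your eigenvalue count handles the linear case cleanly, but be aware that the ``in general'' sentence invoking a one-dimensional normal representation tacitly assumes local linearity and would need a different argument (e.g.\ via Alexander duality and the resulting separation of $S^n$ by the fixed set) to cover arbitrary cellular involutions.
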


\section{Product inequality}

\begin{theorem}\label{thm:product_inequality}
Let $X$ be a $G$-\hspace{0pt}space, $Y$ an $H$-\hspace{0pt}space, both path-connected and metric. Consider the product $X\times Y$ with the component-wise $(G\times H)$-\hspace{0pt}action. Then
\[ \TC{G\times H}{\infty}(X\times Y) \leq  \TC{G}{\infty}(X) + \TC{H}{\infty}(Y)-1. \]
\end{theorem}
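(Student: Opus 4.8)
The plan is to collapse the statement to a single finite stage $k$ and then recognise the fibration $\pi_k$ for $X\times Y$ as an external product of the fibrations $\pi_k$ for $X$ and for $Y$, after which the classical subadditivity of sectional category under products yields the bound. First I would fix stabilisation indices $a_0,b_0\ge 1$ with $\TC{G}{a}(X)=\TC{G}{\infty}(X)$ for $a\ge a_0$ and $\TC{H}{b}(Y)=\TC{H}{\infty}(Y)$ for $b\ge b_0$. Since $\left(\TC{G\times H}{k}(X\times Y)\right)_{k}$ is a decreasing sequence whose stable value is $\TC{G\times H}{\infty}(X\times Y)$, it suffices to establish, for all $k$,
\[ \TC{G\times H}{k}(X\times Y)\le \TC{G}{k}(X)+\TC{H}{k}(Y)-1, \]
and then to specialise to any $k\ge\max(a_0,b_0)$: the right-hand side is then $\TC{G}{\infty}(X)+\TC{H}{\infty}(Y)-1$, while the left-hand side dominates $\TC{G\times H}{\infty}(X\times Y)$.

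The crucial observation is a natural homeomorphism $\P_k(X\times Y)\cong\P_k(X)\times\P_k(Y)$. A path in $X\times Y$ is a pair $(\gamma,\delta)$ with $\gamma\in PX$ and $\delta\in PY$, and the orbit of a point $(a,b)$ under the component-wise $(G\times H)$-action is $Ga\times Hb$. Hence the gluing condition $(G\times H)\omega_i(1)=(G\times H)\omega_{i+1}(0)$ defining $\P_k(X\times Y)$ splits as the conjunction of $G\gamma_i(1)=G\gamma_{i+1}(0)$ and $H\delta_i(1)=H\delta_{i+1}(0)$, which is precisely the pair of conditions cutting out $\P_k(X)$ and $\P_k(Y)$. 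Tracking endpoints through this identification, together with the canonical swap homeomorphism $(X\times Y)^2\cong (X\times X)\times(Y\times Y)$, one checks that $\pi_k$ for $X\times Y$ becomes the external product $\pi_k^X\times\pi_k^Y$ of the two fibrations.

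With this identification in hand I would invoke the equality $\TC{G}{k}(X)=\secat(\pi_k)$ recorded in the proof of Theorem \ref{thm:effectiveTC_lower_bound}, together with the product inequality for sectional category $\secat(p\times q)\le \secat(p)+\secat(q)-1$, valid for fibrations over normal bases \cite{Schwarz}. As $X$ and $Y$ are metric, so are $X\times X$, $Y\times Y$ and their product, which supplies the normality (indeed paracompactness) underlying that inequality. Combining these gives $\TC{G\times H}{k}(X\times Y)=\secat(\pi_k^X\times\pi_k^Y)\le \TC{G}{k}(X)+\TC{H}{k}(Y)-1$, which is exactly the displayed inequality above.

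The main obstacle I anticipate is the product inequality for sectional category itself, i.e. the \emph{staircase} construction behind it: given open covers $\{U_i\}$ of $X\times X$ and $\{V_j\}$ of $Y\times Y$ by sets admitting local sections, one forms the diagonal unions $W_m=\bigcup_{i+j=m+1}U_i\times V_j$ and must shrink them so that, within a fixed $m$, the pieces $U_i\times V_j$ are pairwise disjoint, allowing the local sections to be pasted into a single section over $W_m$. This disjointification is exactly where the metric (hence normal) hypothesis is essential, and it is the only genuinely non-formal input; the orbit bookkeeping, while requiring care to match the $G\times H$-action correctly, is otherwise routine. If a self-contained argument is preferred, the same staircase can be run directly with the given $(G,k)$- and $(H,k)$-motion planners to produce $(G\times H,k)$-motion planners on the $W_m$, bypassing the abstract sectional-category formulation altogether.
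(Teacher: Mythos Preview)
Your proposal is correct and follows essentially the same route as the paper. The paper's proof is a one-line sketch observing that a $(G,k)$-motion planner on $U$ and an $(H,k)$-motion planner on $V$ combine to a $(G\times H,k)$-motion planner on $U\times V$, after which one runs Farber's staircase argument verbatim; your write-up simply unpacks this, making the product decomposition $\P_k(X\times Y)\cong\P_k(X)\times\P_k(Y)$ and the reduction to a single finite stage explicit, and then either citing Schwarz's product inequality for $\secat$ or redoing the staircase directly---both of which amount to the same thing.
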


\begin{proof}
A $(G,k)$-\hspace{0pt}motion planner on $U$ and a $(H,k)$-\hspace{0pt}motion planner on $V$ combine to give a $(G\times H, k)$-\hspace{0pt}motion planner on $U\times V$, and the proof is exactly the same as that of Farber's \cite[Theorem 11]{Farber2003a}.
\end{proof}

Note that an analogue of Theorem \ref{thm:product_inequality} for $G=H$ and the component-wise action replaced with the diagonal one is no longer true.

\begin{example}
Consider an involution on $S^1$ which interchanges the two hemispheres and leaves $S^0=\big\{(-1,0), (1,0)\big\}$ fixed. In view of Proposition \ref{prop:tc_reflection}, this has $\TC{\zmod{2}}{\infty}(S^1)=1$. If the said analogue held, then the corresponding diagonal involution on $S^1\times S^1$ would also have $\TC{\zmod{2}}{\infty}(S^1\times S^1)=1$. However, it is easy to see that its orbit space is homeomorphic to $S^2$. Thus Theorem \ref{thm:effectiveTC_lower_bound} implies that $\TC{\zmod{2}}{\infty}(S^1\times S^1)>2$ and, consequently, $\TC{\zmod{2}}{\infty}(S^1\times S^1) = TC(S^1\times S^1)=3$.
\end{example}

\section{Closing comments}

\subsection{Relation of $\TC{G}{\infty}$ and $\operatorname{cat}_G$}\label{subsection:catG_lower_bound}

Topological complexity is well-known to be related to Lusternik--Schnirelmann category:
\begin{theorem}[\textnormal{{\cite[Theorem 5]{Farber2003a}}}]\label{eq:TC_vs_cat}
If $X$ is path-connected and paracompact, then
\[ \cat(X) \leq  TC(X) \leq  2\cat(X)-1. \]
\end{theorem}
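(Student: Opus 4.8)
The plan is to treat the two inequalities separately, using the identification $TC(X)=\secat(\pi_1)$ recorded in Remark~\ref{remark:TC_lower_bound}, where $\pi_1\colon PX\to X\times X$ is the path fibration.

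For the lower bound $\cat(X)\leq TC(X)$, I would fix a basepoint $x_0\in X$ and restrict $\pi_1$ to the slice $\{x_0\}\times X\cong X$. The resulting fibration is the based path space fibration sending a path that starts at $x_0$ to its endpoint, and a local section of it over an open set $U\subseteq X$ is precisely a continuous contraction of $U$ to $x_0$ inside $X$; hence its sectional category equals $\cat(X)$. Since restricting a fibration to a subspace cannot increase sectional category — given a sectional cover $\{U_i,s_i\}$ of $X\times X$, the sets $U_i\cap(\{x_0\}\times X)$ together with the restricted sections $s_i$, whose images automatically lie over the slice, form a sectional cover of the slice — I would obtain $\cat(X)=\secat\big(\pi_1|_{\{x_0\}\times X}\big)\leq\secat(\pi_1)=TC(X)$.

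For the upper bound $TC(X)\leq 2\cat(X)-1$, I would proceed in two steps. First, for any fibration $p\colon E\to B$ over a path-connected, paracompact base one has $\secat(p)\leq\cat(B)$: if $U\subseteq B$ is contractible in $B$, then a null-homotopy of the inclusion $U\hookrightarrow B$ lifts through $p$ by the homotopy lifting property, starting from a constant map into a single fibre, and the end of the lift is a section of $p$ over $U$; thus a categorical cover of $B$ yields a sectional cover of the same cardinality. Applying this to $\pi_1$ gives $TC(X)=\secat(\pi_1)\leq\cat(X\times X)$. Second, I would invoke the classical product inequality for Lusternik--Schnirelmann category, $\cat(X\times Y)\leq\cat(X)+\cat(Y)-1$, with $Y=X$, to conclude $\cat(X\times X)\leq 2\cat(X)-1$.

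The substantive content sits entirely in the product inequality $\cat(X\times X)\leq 2\cat(X)-1$, which is where I expect the main obstacle to lie. Its proof starts from categorical covers $U_1,\dots,U_m$ and $V_1,\dots,V_m$ of the two factors and the $m^2$ categorical sets $U_i\times V_j$ of the product, and then groups them by the index sum $i+j$ into $2m-1$ sets; the delicate point is that sets sharing a common index sum must be made disjoint — after shrinking the cover using normality/paracompactness — so that the individual contractions patch into a single contraction on each grouped set. Alternatively, one can bypass the category product inequality and run Farber's original argument directly: concatenate the contraction of $U_i$ to a point $p_i$, a fixed path from $p_i$ to $p_j$ (using path-connectedness), and the reversed contraction of $U_j$ to build an explicit motion planner on each $U_i\times U_j$, and then perform the same index-sum grouping on $X\times X$ to assemble $2\cat(X)-1$ motion planners. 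Both routes converge on the identical combinatorial regrouping, which is the one place where paracompactness is genuinely used.
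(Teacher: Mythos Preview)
The paper does not supply its own proof of this statement; it is quoted verbatim from Farber~\cite[Theorem~5]{Farber2003a} and used as background. Your proposal is a correct reconstruction of Farber's original argument: the lower bound via restriction of $\pi_1$ to a slice $\{x_0\}\times X$ and identification of the resulting sectional category with $\cat(X)$, and the upper bound via $\secat(\pi_1)\leq\cat(X\times X)$ together with the product inequality for Lusternik--Schnirelmann category (or, equivalently, the direct index-sum regrouping of the sets $U_i\times U_j$). There is nothing to add or correct.
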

\noindent Thus it is natural to ask whether $\TC{G}{\infty}$ is somehow related to Lusternik--Schnirelmann $G$-\hspace{0pt}category. (Recall that $\operatorname{cat}_G(X)$ is defined as follows. A $G$-\hspace{0pt}invariant subset $U\subseteq X$ is called \textit{$G$-\hspace{0pt}categorical} if the inclusion $U \to X$ is $G$-\hspace{0pt}homotopic to a $G$-\hspace{0pt}map with values in a single orbit. Then $\operatorname{cat}_G(X)$ is defined to be the least integer $\ell\geq  1$ such that there exists an open cover of $X$ by $\ell$ sets which are $G$-\hspace{0pt}categorical.) In fact, both $TC_G$ and $TC^G$ enjoy a $\operatorname{cat}_G$ analogue of Theorem \ref{eq:TC_vs_cat}, at least for spaces with non-empty fixed point sets.

Let us explain that we cannot possibly hope for a lower bound for $\TC{G}{\infty}$ in terms of $\operatorname{cat}_G$. In fact, the following examples show that if $\mathcal{TC}$ were a \textit{somehow} defined homotopy invariant with the property that $\mathcal{TC}(X)\leq  TC(X)$, then it likewise would not be possible to obtain such a lower bound. In other words, this phenomenon is not governed by the choice of particular definition, but rather by adopted ``philosophy''.

\begin{itemize}
\item  Let $G$ be a non-trivial discrete group. The universal space $EG$ is contractible, hence $TC(EG)=1$ and, consequently, $\mathcal{TC}(EG)=1$. But $EG$ is a free $G$-\hspace{0pt}space, and so
\[\operatorname{cat}_G(X)=\operatorname{cat}(EG/G)=\operatorname{cat}(BG),\] 
which is equal to the cohomological dimension of $G$, see \cite{EilenbergGanea}. In particular, the difference can be arbitrarily large.

Since $EG$ is not $G$-\hspace{0pt}contractible (a necessary condition for $G$-\hspace{0pt}contractibility of a $G$-\hspace{0pt}space $X$ is contractibility of the orbit space $X/G$), this also shows that we cannot have the property that $\mathcal{TC}(X)=1$ if and only if $X$ is $G$-\hspace{0pt}contractible.

\item Consider the antipodal action on $S^n$. Then \[ \operatorname{cat}_{\zmod{2}}(S^n)=\operatorname{cat}(\mathbb{R}P^n)=n+1. \]
On the other hand, $TC(S^n)\leq  3$. Thus we cannot have $\cat_G \leq  \mathcal{TC}$ even in the realm of actions on closed manifolds.

\item The outlook is the same if we turn attention to actions with fixed points. In \cite[Proposition~4.5]{BlaszczykKaluba} we have constructed an example of a $\zmod{p}$-\hspace{0pt}action on~$S^n$, $n\geq  5$, with an essential homology $(n-2)$-\hspace{0pt}sphere $\Sigma$ as the fixed point set, and this has $\operatorname{cat}_{\zmod{p}}(S^n)\geq  \operatorname{cat}(\Sigma)\geq  n-1$.
\end{itemize}

\begin{remark}
We point out that the first two examples above also show that $\TC{G}{\infty}(X)$ does not coincide with $TC(X/G)$ in general, not even for free actions.
\end{remark}

\subsection{Open problems}

Let us conclude the paper with several natural problems which we do not know the answers to at this point.

\begin{itemize}
\item Identify the fibre of fibration $\pi_k$. (This was raised by J. Gonzalez during the \textit{Workshop on $TC$ and Related Topics}, held at Oberwolfach in March 2016.)
\item Characterize the class of spaces with $\TC{G}{\infty}(X)= 1$. A reasonable guess seems to be that this happens if and only if the orbit space $X/G$ is contractible, provided that $G$ is a finite group and $X$ is a finite-dimensional $G$-CW complex. Under these assumptions contractibility of~$X$ implies contractibility of $X/G$, see \cite[Theorem 6.15]{tomDieck}. This is clearly required in the ``only if'' direction, as contractible spaces have effective $TC$ equal to~$1$. An easy counterexample where this fails without finite-dimensionality assumption is $EG$.
\item Determine what sort of sequences can arise as $\big(\TC{G}{k}(X)\big)_{k=1}^{\infty}$. They certainly need to be non-increasing and bounded from below by $1$, but perhaps there are other restrictions, presumably related to the nature of the action (cf. Theorem \ref{thm:effectiveTC_free_actions}). In particular, it would be interesting to know whether such a sequence can be arbitrarily long before it stabilizes and how big the difference between two consecutive elements of a sequence can be.
\end{itemize}

\noindent\textbf{Acknowledgements.} We wish to express gratitute to H. Colman, M. Grant and M. Farber for suggestions and comments on some of our ideas during the XXIst Oporto Meeting on Geometry, Topology and Physics.

\small
\bibliography{Equivariant-TC}

\noindent\textsc{Zbigniew Błaszczyk}\\
\textit{Faculty of Mathematics and Computer Science\\
Adam Mickiewicz University\\
Umultowska 87\\
61-614 Poznań, Poland}\\
\texttt{blaszczyk@amu.edu.pl}\medskip

\noindent\textsc{Marek Kaluba}\\
\textit{Faculty of Mathematics and Computer Science\\
Adam Mickiewicz University\\
Umultowska 87\\
61-614 Poznań, Poland}\\
\texttt{kalmar@amu.edu.pl}
\end{document}